\documentclass[11pt,reqno]{amsart}

\usepackage[margin=1.2in]{geometry}
\usepackage[english]{babel}
\usepackage{amsmath,amssymb,amsfonts,amsthm,mathtools}
\usepackage{dsfont}
\usepackage{hyperref}
\usepackage{tikz-cd}

\hypersetup{
  colorlinks = true,
  urlcolor = blue,
  linkcolor = blue,
  citecolor = black
}

\numberwithin{equation}{section}
\theoremstyle{plain}
\newtheorem{thmctr}{}[section]
\newtheorem{lemma}[thmctr]{Lemma}
\newtheorem{theorem}[thmctr]{Theorem}

\newtheorem{corollary}[thmctr]{Corollary}
\theoremstyle{definition}

\theoremstyle{remark}
\newtheorem{remark}[thmctr]{Remark}

\newcommand{\kk}{\Bbbk}
\newcommand{\unit}{\mathds{1}}

\newcommand{\cA}{\mathcal{A}}
\newcommand{\B}{\mathcal{B}}
\newcommand{\C}{\mathcal{C}}
\newcommand{\D}{\mathcal{D}}
\newcommand{\E}{\mathcal{E}}
\newcommand{\F}{\mathcal{F}}
\newcommand{\Z}{\mathcal{Z}}
\newcommand{\T}{\mathcal{T}}
\newcommand{\cS}{\mathcal{S}}

\newcommand{\loc}{\mathrm{loc}}
\newcommand{\Hom}{\mathrm{Hom}}
\newcommand{\ra}{\mathrm{ra}}
\newcommand{\im}{\mathrm{Im}}
\newcommand{\FP}{\mathrm{d}}
\newcommand{\Vect}{\mathrm{Vect}}
\newcommand{\id}{\mathrm{id}}

\newcommand{\oB}{\overline{\B}}

\newcommand{\bF}{\mathbf{F}}

\newcommand{\U}{\mathrm{U}}

\title[Transparent subalgebras and local modules]{Transparent Subalgebras and Local Module Categories}
\author{Harshit Yadav and Kenichi Shimizu}
\date{\today}

\begin{document}
\begin{abstract}
Let $A$ be a commutative simple algebra in a braided finite tensor category
$\B$. We identify the largest transparent subalgebra of $A$ as the algebra 
induced by a central lift of the free-module functor.
This identification gives formulas for the Frobenius--Perron dimension and
the M\"uger center of the category of local $A$-modules. These formulas give
criteria for nondegeneracy, symmetry, and modularity, together with sharp
bounds on $\FP_{\B}(A)$. We also realize the M\"uger center of $\B$ as a
category of local modules over an adjoint algebra. Finally, we prove a
relative-center factorization and deduce that taking the category of local 
modules preserves the relative Witt class.
\end{abstract}

\maketitle


\section{Introduction}

Throughout, we work over an algebraically closed field $\kk$. 
Let $A$ be a commutative algebra in a braided tensor category $\B$. The
category $\B_A^{\loc}$ of local $A$-modules consists of the modules whose
$A$-action is compatible with the double braiding. It inherits a braiding
from $\B$. A basic structural problem is to determine the size of
$\B_A^{\loc}$ and the degeneracy of its braiding directly from $\B$ and $A$ 
\cite{pareigis1995braiding,kirillov2002q,davydov2013witt,laugwitz2022constructing,shimizu2026commutative}.

Suppose now that $\B$ is a braided finite tensor category and that $A$ is
simple.
The M\"uger center $\Z_2(\B)$ consists of the objects whose double braiding
with every object of $\B$ is the identity. We call a subalgebra of $A$
\emph{transparent} if it belongs to $\Z_2(\B)$, and write
$A':=A\cap\Z_2(\B)$ for the largest transparent subalgebra of $A$. Our main
result identifies $A'$ intrinsically as the algebra induced by the
inverse-braiding central lift of the free-module functor. Consequently, $A'$ controls both the Frobenius--Perron dimension and the M\"uger
center of $\B_A^{\loc}$.

\begin{theorem}[Main theorem]\label{thm:main}
Let $\B$ be a braided finite tensor category and let $A$ be a commutative
simple algebra in $\B$. Let $\oB$ denote $\B$ equipped with the reversed
braiding, and let
\[
  F_A^-:\oB\longrightarrow\Z(\B_A)
\]
be the inverse-braiding central lift of the free-module functor. 
\begin{enumerate}
  \item There is an isomorphism
  \[
    A' \cong (F_A^-)^{\ra}(\unit_{\Z(\B_A)})
  \]
  of subalgebras of $A$. Moreover, the corestriction of $F_A^-$ induces a
  braided equivalence
  \[
    \im(F_A^-) \simeq \overline{\B_{A'}}.
  \]
  \item The Frobenius--Perron dimension of the local-module category is
  \[
    \FP(\B_A^{\loc})
    =
    \frac{\FP(\B)\FP_{\B}(A')}{\FP_{\B}(A)^2}.
  \]
  \item There are braided equivalences
  \[
    \Z_2(\B_A^{\loc})
    \simeq
    \Z_2(\B_{A'})
    \simeq
    \bigl(\Z_2(\B)\bigr)_{A'}.
  \]
\end{enumerate}
\end{theorem}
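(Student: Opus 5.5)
The plan is to study the braided tensor functor $F_A^-:\oB\to\Z(\B_A)$, $X\mapsto (X\otimes A,\ \text{half-braiding from } c^{-1})$, through the general theory of central functors and their induced algebras. The right adjoint $(F_A^-)^{\ra}$ is lax monoidal, so $L:=(F_A^-)^{\ra}(\unit_{\Z(\B_A)})$ is an algebra in $\oB$. Forgetting down to $\B_A$, the unit $\unit_{\Z(\B_A)}$ is $A$ with its canonical half-braiding, and $\Hom_{\B_A}(X\otimes A,A)\cong\Hom_\B(X,A)$. Hence $L$ is the largest subobject $\iota:L\hookrightarrow A$ such that the canonical half-braiding on $A$, restricted along $\iota$, agrees with the inverse-braiding half-braiding.

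The key computation is to unwind this condition. A morphism $X\to A$ factors through $L$ precisely when, for every $M\in\B_A$, the two maps $X\otimes M\to M$ given by
\[
a_M\circ(\iota\otimes\id)\quad\text{and}\quad a_M\circ(\iota\otimes\id)\circ c_{M,X}\circ c_{X,M},
\]
or some equivalent formulation via the module structure, coincide. Testing on the free modules $M=Y\otimes A$ and using commutativity of $A$ shows that this is equivalent to $c_{Y,X}\circ c_{X,Y}=\id$ on $X$ for all $Y$. So $L$ is the largest subobject of $A$ lying in $\Z_2(\B)$. Subobjects of $A$ in the Müger center are closed under the multiplication (by the universal property), so $L=A'$ as subalgebras. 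This is the step I expect to be the main obstacle: one has to check carefully that the two half-braidings on a free module differ exactly by a double braiding, and that the equalizer defining $L$ really is a subalgebra.

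For the image, I would invoke the general principle that a tensor functor $F$ with induced algebra $L=F^{\ra}(\unit)$ factors as $\B\to\B_L\xrightarrow{\sim}\im(F)$, valid when $F^{\ra}$ is exact and the projection formula holds (here $F_A^-$ is the free functor, so both are available). This yields $\im(F_A^-)\simeq\B_{A'}$. Because $A'\in\Z_2(\B)$, the category $\B_{A'}=\B_{A'}^{\loc}$ is braided, and the braiding transported from $\oB$ is the reversed one, giving $\overline{\B_{A'}}$.

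For (2), I would use the known identification of the Müger centralizer in $\Z(\B_A)$: the centralizer of $F_A^+(\B)$ is $\overline{\B_A^{\loc}}$. Combining this with the double-centralizer theorem in the nondegenerate category $\Z(\B_A)$ gives
\[
\FP(\B_A^{\loc})\,\FP(\im F^+_A)=\FP(\Z(\B_A))=\FP(\B)^2/\FP_\B(A)^2 .
\]
To use this, I would relate the images of $F^\pm$ via their common intersection: $F^+$ and $F^-$ agree exactly on $\Z_2$, so the image of $F^+$ can be computed like that of $F^-$, with the same $A'$. Using $\FP(\B_{A'})=\FP(\B)/\FP_\B(A')$, this produces the stated formula.

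For (3), $\Z_2(\B_A^{\loc})$ is the intersection of $\overline{\B_A^{\loc}}$ with its centralizer, namely $\im F^+\cap\im F^-$ inside $\Z(\B_A)$. This is the image of $\Z_2(\B)$, which by the first part equals $\Z_2(\B)_{A'}$. I then identify $\Z_2(\B_{A'})$ with this same category directly, since $A'$ is transparent.
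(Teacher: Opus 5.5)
Your argument for Part~(1) is correct and takes a different route from the paper. The paper obtains $K:=(F_A^-)^{\ra}(\unit)\subseteq A'$ indirectly. It applies the comparison theorem to the surjective corestriction $\oB\to\D:=\im(F_A^-)$, so that $\D\simeq(\oB)_K$ with a braided free-module functor, and then invokes \cite[Lemma~5.10]{shimizu2026commutative} to conclude that $K$ is transparent. You instead describe $K$ by Yoneda as a subobject $\iota$ of $A$ and test on free modules $M=Y\otimes A$. This reduces the half-braiding condition to $(\id_Y\otimes\iota)\circ c_{X,Y}\circ c_{Y,X}=\id_Y\otimes\iota$, and monicity of $\id_Y\otimes\iota$ gives $K\in\Z_2(\B)$ directly. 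The reverse inclusion is the same locality argument as Lemma~\ref{lem:minus-half-braiding-locality}. Your route is more elementary and avoids Lemma~5.10. One small correction: $(F_A^-)^{\ra}$ itself need not be exact, so the comparison theorem must be applied to the surjective corestriction $\oB\to\im(F_A^-)$, as in the paper.

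The gap is in Parts~(2) and~(3), where you switch to $F_A^+$. Part~(1) computes $\im(F_A^-)$, but the centralizer identity you invoke is about $\im(F_A^+)$. That $\sigma^+$ and $\sigma^-$ agree on $\Z_2(\B)$ says nothing about $\FP(\im F_A^+)$. Your computation also does not transfer verbatim. With the left action $a^M\circ c_{A,M}$ underlying Lemma~\ref{lem:minus-half-braiding-locality}, replacing $c_{M,X}^{-1}$ by $c_{X,M}$ turns the half-braiding condition into the tautology $b^M\circ c_{X,M}=b^M\circ c_{X,M}$, so no transparency constraint emerges.

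In~(3) you write $\overline{\B_A^{\loc}}\cap(\text{its centralizer})$ as $\im F_A^+\cap\im F_A^-$. This identifies the embedded local modules with $\im(F_A^-)$, which fails in general. For $\B$ nondegenerate and $A\not\cong\unit$, $\im(F_A^-)\simeq\oB$ has dimension $\FP(\B)$, while $\FP(\B_A^{\loc})=\FP(\B)/\FP_{\B}(A)^2$. You also leave unproved that the intersection is exactly the image of $\Z_2(\B)$.

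The missing idea is to use the centralizer theorem for $F_A^-$ itself, as in \eqref{eq:centralizer-of-lift}: $\B_A^{\loc}\simeq\mathcal{L}:=\Z_2(\D\subseteq\Z(\B_A))$. Then~(2) is the centralizer formula $\FP(\mathcal{L})\FP(\D)=\FP(\B_A)^2$ combined with $\FP(\D)=\FP(\B)/\FP_{\B}(A')$ from~(1). For~(3), the double-centralizer theorem gives $\Z_2(\mathcal{L}\subseteq\Z(\B_A))=\D$. Hence $\Z_2(\mathcal{L})=\mathcal{L}\cap\D=\Z_2(\D)\simeq\Z_2(\B_{A'})$, and Lemma~\ref{lem:transparent-muger-center} finishes. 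The fact that two mutual centralizers in a nondegenerate category share their M\"uger center, namely their intersection, is exactly the step your~(3) lacks.
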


After introducing the preliminaries in \S\ref{sec:preliminaries}, this result is proved in \S\ref{sec:main-theorem}.  
Part (1) is the main input for the rest of the paper. To explain the proof, set
$\D:=\im(F_A^-)$. A centralizer theorem identifies $\B_A^{\loc}$ with the
M\"uger centralizer of $\D$ inside $\Z(\B_A)$. Part~(1) identifies
$\D$ with $\overline{\B_{A'}}$. Since $\Z(\B_A)$ is nondegenerate, the
centralizer dimension formula gives Part~(2), while the double-centralizer
theorem gives Part~(3).

The final section \S\ref{sec:applications} contains three applications of this result. First,
the formulas show explicitly how the transparent part of $A$ affects the
local-module category. For example, $\B_A^{\loc}$ is nondegenerate if and
only if
\[
  \FP_{\B}(A')=\FP\bigl(\Z_2(\B)\bigr).
\]
When $\B$ is nondegenerate, $A'\cong\unit$, so the correction term
$\FP_{\B}(A')$ disappears. At the opposite extreme, if $\B$ is symmetric,
then $A'=A$ and every $A$-module is local. We also obtain a factorization of
$\Z(\B_A)$ when $\B_A^{\loc}$ is nondegenerate, sharp bounds on
$\FP_{\B}(A)$, and a criterion for $\B_A^{\loc}$ to be modular. 

Second, set $\mathcal{B}^{\mathrm{env}} =\oB\boxtimes\B$ and consider the tensor-product
functor
\[
  F:\B^{\mathrm{env}}\longrightarrow\B,
  \qquad
  X\boxtimes Y\longmapsto X\otimes Y.
\]
Its right adjoint determines the adjoint algebra $\bF:=F^\ra(\unit_\B)$ in
$\B^{\mathrm{env}}$. Theorem~\ref{thm:adjoint-algebra-local-modules}
identifies
\[
  (\B^{\mathrm{env}})_\bF^{\loc}
  \simeq
  \Z_2(\B).
\]
Thus, the M\"uger center itself is realized as a local-module category. To prove this, we show that, for braided finite tensor
categories $\C$ and $\D$, $\Z_2(\C\boxtimes\D) \simeq \Z_2(\C)\boxtimes\Z_2(\D)$; this may be of independent interest.

Finally, let $\E$ be a finite symmetric tensor category and suppose that
$\B$ is equipped with a symmetric tensor equivalence
$\E\simeq\Z_2(\B)$. If $A\cap\E\cong\unit$, we prove the relative-center
factorization
\[
  \Z(\B_A,\E)
  \simeq
  \B\boxtimes_{\E}\overline{\B_A^{\loc}}.
\]
It follows that $\B$ and $\B_A^{\loc}$ are Witt equivalent over $\E$.

\subsection*{Acknowledgements}
HY was partly supported by a start-up grant from the University of Alberta and an NSERC Discovery Grant. KS was supported by JSPS KAKENHI Grant Number JP24K06676. The authors used ChatGPT while developing and drafting parts of this paper to explore possible formulations and proof strategies. All mathematical statements and proofs were independently checked by the authors, who take sole responsibility for the contents of the paper.


\section{Preliminaries}\label{sec:preliminaries}


\subsection{Induced algebras and monoidal adjunctions}
\label{subsec:induced-transparent-algebra}

Let $\C$, $\D$ and $\E$ be monoidal categories. 
We first recall how algebra maps behave under a composite monoidal
adjunction. If a strong monoidal functor $T:\C\to\D$ has a right adjoint,
we write
\[
  \zeta_T:\Hom_{\C}(X,T^{\ra}(Y))
  \longrightarrow
  \Hom_{\D}(T(X),Y)
\]
for the adjunction bijection. This bijection restricts to algebra maps when
$X$ and $Y$ are algebras.

\begin{lemma}\label{lem:alg-map-factorization}
Let $F:\C\to\D$ and $G:\D\to\E$ be strong monoidal functors with right
adjoints, and set $H=G\circ F$. Define
\[
  P:=H^{\ra}(\unit_{\E}),
  \qquad
  Q:=F^{\ra}(\unit_{\D}).
\]
The monoidal unit isomorphism $G(\unit_{\D})\cong\unit_{\E}$ induces an
algebra map $j:Q\to P$. A map $i:X\to P$ factors through $j$ if and only if
there is a map $\varphi:F(X)\to\unit_{\D}$ such that
\[
  \zeta_H(i)=G(\varphi),
\]
where we use the monoidal unit isomorphism for $G$. The same statement holds
for algebra maps.
\end{lemma}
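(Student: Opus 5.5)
The plan is to express everything through the composite adjunction $H^{\ra}\cong F^{\ra}G^{\ra}$ and to describe $j$ as a mate. First we fix the unit and counit conventions. We write $\epsilon^G:GG^{\ra}\to\id$ for the counit of $G\dashv G^{\ra}$. We write $\eta^G:\id\to G^{\ra}G$ for its unit, and similarly for $F$.

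We identify $P=H^{\ra}(\unit_\E)$ with $F^{\ra}G^{\ra}(\unit_\E)$. Under this identification the bijection is the composite
\[
\zeta_H(i)=\zeta_G\bigl(\zeta_F(i)\bigr),
\]
that is, $\zeta_H(i)=\epsilon^G_{\unit_\E}\circ G(\zeta_F(i))$. Let $u:\unit_\D\to G^{\ra}(\unit_\E)$ be the map corresponding under $\zeta_G$ to the unit isomorphism $G(\unit_\D)\cong\unit_\E$. We then define $j:=F^{\ra}(u)$. Since $F^{\ra}$ and $G^{\ra}$ are lax monoidal (doctrinal adjunction) and $u$ is the unit of the algebra $G^{\ra}(\unit_\E)$, the map $j$ is an algebra map.

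For the direction $(\Rightarrow)$, suppose $i=j\circ k$ for some $k:X\to Q$. Then $\zeta_F(i)=F^{\ra}$-mate computations give
\[
\zeta_F(i)=u\circ\zeta_F(k),
\]
which is just naturality of $\zeta_F$ in the target variable. Set $\varphi:=\zeta_F(k):F(X)\to\unit_\D$. Naturality of $\zeta_G$ in the source then gives
\[
\zeta_H(i)=\zeta_G(u\circ\varphi)=\zeta_G(u)\circ G(\varphi).
\]
Since $\zeta_G(u)$ is the unit isomorphism, this is the required identity.

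For the direction $(\Leftarrow)$, suppose $\zeta_H(i)=G(\varphi)$, up to the unit isomorphism. Put $k:=\zeta_F^{-1}(\varphi)$. Running the same computation backwards shows $\zeta_H(j\circ k)=G(\varphi)=\zeta_H(i)$. Injectivity of $\zeta_H$ then gives $i=j\circ k$.

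For algebra maps, the claim is that $\zeta_F$, $\zeta_G$ and $\zeta_H$ restrict to bijections on algebra maps. Here $F(X)$ carries the algebra structure induced by strong monoidality, and $\unit_\D$ carries the trivial one. Given this, when $i$ and $\varphi$ are algebra maps, $k$ is an algebra map, and conversely.

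The main obstacle is this bookkeeping. We must check that $j$, defined via $u$, is exactly the map "induced by the unit isomorphism" in the statement. We must also check that the identification $H^{\ra}\cong F^{\ra}G^{\ra}$ is compatible with the lax monoidal structures, so that all the algebra structures agree. I would verify this by using the uniqueness of mates of monoidal natural transformations: the comparison $H^{\ra}\cong F^{\ra}G^{\ra}$ is the mate of the identity $H=GF$, hence it is monoidal. The remaining steps are naturality of the adjunction bijections.
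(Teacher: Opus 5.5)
Your proposal is correct and is essentially the paper's argument. Your naturality computation $\zeta_H(j\circ k)=G(\zeta_F(k))$ (modulo the unit isomorphism) is exactly the commutative square of adjunction bijections that the paper uses, and both directions then follow from bijectivity of $\zeta_F$ and $\zeta_H$, with the algebra-map version coming from monoidal adjunctions restricting to algebras; you are simply more explicit about defining $j=F^{\ra}(u)$ and about the monoidality of $H^{\ra}\cong F^{\ra}G^{\ra}$.
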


\begin{proof}
The adjunction bijections for $F$, $G$, and $H$ form a commutative square
\[
\begin{tikzcd}[column sep=large,row sep=large]
\operatorname{Hom}_{\C}(X,Q)
  \arrow[r,"\zeta_F"]
  \arrow[d,"j\circ(-)"']
&
\operatorname{Hom}_{\D}(F(X),\unit_{\D})
  \arrow[d,"G(-)"]
\\
\operatorname{Hom}_{\C}(X,P)
  \arrow[r,"\zeta_H"']
&
\operatorname{Hom}_{\E}(H(X),\unit_{\E}).
\end{tikzcd}
\]
Since the horizontal maps are bijections, $i$ belongs to the image of the
left vertical map if and only if $\zeta_H(i)$ belongs to the image of the
right vertical map. Monoidal adjunctions restrict these bijections to algebra
maps, which proves the last assertion.
\end{proof}


\subsection{The inverse-braiding central lift}
Suppose that $\B$ is a braided monoidal category with coequalizers and the tensor product preserves them. Let $A$ be a commutative algebra in $\B$. We denote by $\B_A$ the monoidal category
of right $A$-modules. Its tensor product is the relative tensor product over
$A$, and its tensor unit is the regular module $A$. If $a^M:M\otimes A\to M$
is the action of $A$ on $M$, then $M$ is \emph{local} if
\[
  a^M=a^M\circ c_{A,M}\circ c_{M,A}.
\]
The full subcategory of local modules is denoted by $\B_A^{\loc}$. It is
braided by the braiding induced from $\B$
\cite[Section~2]{pareigis1995braiding}.

We call a commutative algebra $K$ in $\B$ \emph{transparent} if
$K\in\Z_2(\B)$. In this case every $K$-module is local, so the braiding of
$\B$ induces a braiding on $\B_K$.

An algebra map is always assumed to preserve the multiplication and the unit.
A \emph{subalgebra} of $A$ is an algebra equipped with a monic algebra map to
$A$. An \emph{ideal} of $A$ is a subobject preserved by left and right
multiplication, and $A$ is \emph{simple} if its only ideals are $0$ and $A$.

For $X\in\B$ and $M\in\B_A$, define
\[
\begin{aligned}
\sigma^-_{X,M}:(X\otimes A)\otimes_A M
&\cong X\otimes M
\xrightarrow{c_{M,X}^{-1}} M\otimes X
\cong M\otimes_A(X\otimes A).
\end{aligned}
\]
This is one of the two standard central structures on the free-module
functor; see
\cite[Exercises~8.8.3--8.8.5 and Proposition~8.8.10]{etingof2016tensor}.
It defines a braided tensor functor
\[
  F_A^-:\oB\longrightarrow\Z(\B_A).
\]
where $\oB$ is the same monoidal category with the reversed braiding $\overline{c}_{X,Y} = c_{Y,X}^{-1}$.

We next make explicit the locality calculation used in the proof of the Main
Theorem.

\begin{lemma}\label{lem:minus-half-braiding-locality}
Let $L\subseteq A$ be a subalgebra, with inclusion $i_L:L\hookrightarrow A$,
and set
\[
  q_L:=m_A\circ(i_L\otimes\id_A):F_A(L)=L\otimes A\longrightarrow A
\]
be the algebra map corresponding to $i_L$ under the free-module adjunction.
For $(M,a^M)\in\B_A$, let
\[
  b^M:=a^M\circ(\id_M\otimes i_L):M\otimes L\longrightarrow M
\]
be the restricted right $L$-action. The half-braiding compatibility condition
for $q_L$ at $M$ holds if and only if the restricted module $M$ is local over
$L$. Consequently, $q_L:F_A^-(L)\to A$ is a morphism in $\Z(\B_A)$ if and
only if every $A$-module is local after restriction to $L$.
\end{lemma}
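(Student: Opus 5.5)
The plan is to unwind what it means for $q_L$ to be a morphism in $\Z(\B_A)$. The object $F_A^-(L)=(L\otimes A,\sigma^-_{L,-})$ carries the half-braiding $\sigma^-$, while the unit $A$ of $\Z(\B_A)$ carries the trivial half-braiding given by the unit isomorphisms $A\otimes_A M\cong M\cong M\otimes_A A$. For $q_L$ to be a morphism in the center, the following square must commute for every $M\in\B_A$:
\[
  (\id_M\otimes_A q_L)\circ\sigma^-_{L,M}
  =
  (\text{unit iso})\circ(q_L\otimes_A\id_M),
\]
as maps $(L\otimes A)\otimes_A M\to M\otimes_A A\cong M$. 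Since $q_L$ is a map of $A$-modules, this compatibility with the half-braiding is the only condition beyond $q_L$ being a map in $\B_A$.

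I will precompose both sides with the canonical isomorphism $L\otimes M\cong (L\otimes A)\otimes_A M$, which is induced by $u_A$ and the action. Then both sides become maps $L\otimes M\to M$ in $\B$, and the task is to compute each one.

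\emph{Right-hand side.} The composite $L\otimes M\to (L\otimes A)\otimes_A M\to A\otimes_A M\cong M$ is $i_L\otimes\id_M$ followed by the left $A$-action on $M$. For a right module over a commutative algebra, the left action is defined by $a^M\circ c_{A,M}$. So the right-hand side is $a^M\circ c_{A,M}\circ(i_L\otimes\id_M)=b^M\circ c_{L,M}$, using naturality of the braiding.

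\emph{Left-hand side.} By definition, $\sigma^-$ applies $c_{M,L}^{-1}$ to $L\otimes M$ and lands in $M\otimes L\otimes A\to M\otimes_A(L\otimes A)$. Then $\id_M\otimes_A q_L$ followed by $M\otimes_A A\cong M$ gives $a^M\circ(\id_M\otimes i_L)=b^M$, after checking the unit bookkeeping. So the left-hand side is $b^M\circ c_{M,L}^{-1}$.

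Equating the two sides gives $b^M\circ c_{M,L}^{-1}=b^M\circ c_{L,M}$. Precomposing with the invertible $c_{M,L}$, this becomes $b^M=b^M\circ c_{L,M}\circ c_{M,L}$, which is exactly locality of $M$ over $L$. The consequence then follows by quantifying over all $M\in\B_A$.

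The main obstacle is the identification of the left action on $M$ and the convention in the definition of local module. Which of $c_{A,M}$ or $c_{M,A}^{-1}$ defines the left action affects only whether one ends up with $c\circ c$ or $c^{-1}\circ c^{-1}$, and these conditions are equivalent. The other point to check carefully is that the identifications through $\otimes_A$ are legitimate. This holds because all maps in sight are $A$-balanced and the tensor product preserves coequalizers, so it suffices to verify the equality after precomposing with the epimorphisms (here isomorphisms) from $L\otimes M$.
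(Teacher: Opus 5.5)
Your proposal is correct and follows the paper's argument: under the canonical identifications you reduce the half-braiding square to $b^M\circ c_{M,L}^{-1}=b^M\circ c_{L,M}$ and then precompose with $c_{M,L}$. Your explicit evaluation of the two sides, using the left action $a^M\circ c_{A,M}$, supplies exactly the bookkeeping the paper leaves implicit. One correction to your closing remark: that convention is forced, not harmless. With the left action $a^M\circ c_{M,A}^{-1}$, the maps $c_{M,X}^{-1}$ do not give a valid half-braiding in general (already at $M=A$ this would require $c_{X,A}\circ c_{A,X}=\id_{A\otimes X}$), so $F_A^-$ would not be defined, and your square would collapse to the tautology $b^M\circ c_{M,L}^{-1}=b^M\circ c_{M,L}^{-1}$ rather than to a $c^{-1}\circ c^{-1}$ condition.
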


\begin{proof}
The condition that $q_L$ preserve the half-braiding must hold for every
$M\in\B_A$. Under the canonical identifications
\[
  F_A^-(L)\otimes_A M\cong L\otimes M,
  \qquad
  M\otimes_A F_A^-(L)\cong M\otimes L,
\]
this condition is the equality
\[
  b^M\circ c_{M,L}^{-1}
  =
  b^M\circ c_{L,M}
\]
of maps $L\otimes M\to M$. Precomposing with $c_{M,L}$ gives
\[
  b^M=b^M\circ c_{L,M}\circ c_{M,L},
\]
which is precisely the locality condition for the restricted right
$L$-module.
\end{proof}


\subsection{Finite tensor categories}

We use the standard terminology
for finite tensor categories and tensor functors \cite{etingof2016tensor}. For a tensor functor $F:\C\to\D$, we write
$F^{\ra}$ for its right adjoint and $\im(F)$ for the tensor subcategory of
$\D$ consisting of subquotients of objects in the essential image of $F$.
The functor is called \emph{surjective} if $\im(F) = \D$. We write
$\boxtimes$ for the Deligne product. A tensor subcategory $\D$ of $\C$
is a full subcategory that contains $\unit_\C$ and is closed under tensor
products, direct sums, duals, and subquotients.

We denote the Frobenius--Perron dimension of an object $X\in\C$ by
$\FP_{\C}(X)$ and that of $\C$ by $\FP(\C)$; see
\cite[Section~4]{etingof2016tensor}.

Let $\B$ be a braided finite tensor category with braiding $c$. 
If $\cA$ is a full tensor subcategory of $\B$, we denote its M\"uger
centralizer in $\B$ by $\Z_2(\cA\subseteq\B)$. 
Thus $\Z_2(\cA\subseteq\B)$ is the full tensor subcategory consisting of
the objects $X\in\B$ such that $c_{Y,X}\circ c_{X,Y}=\id_{X\otimes Y}$ 
for every $Y\in\cA$. The \emph{M\"uger center} of a braided finite tensor
category $\B$ is
\[
  \Z_2(\B):=\Z_2(\B\subseteq\B).
\]
The category $\B$ is called \emph{nondegenerate} if $\Z_2(\B)\simeq\Vect$.


\subsection{Local modules}
Let $\B$ be a braided finite tensor category and $A$ a commutative simple algebra in $\B$. We use the following facts about commutative simple algebras.

\begin{lemma}\label{lem:simple-commutative-module-categories}
\begin{enumerate}
  \item The category $\B_A$ is a finite tensor category and
  $\B_A^{\loc}$ is a braided finite tensor category.
  \item Every subalgebra of $A$ is simple.
\end{enumerate}
\end{lemma}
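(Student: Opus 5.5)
We treat the two parts separately. Both are standard facts about commutative simple algebras in braided finite tensor categories, so the plan is to reduce each to known results with short arguments.

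For part (1), the key input is that for a commutative algebra $A$ with $\Hom_\B(\unit,A)\cong\kk$ (connected), the category $\B_A$ of right $A$-modules is a multitensor category. It is in fact tensor, since its unit $A$ satisfies $\mathrm{End}_{\B_A}(A)\cong\Hom_\B(\unit,A)$. The first step is therefore to show that a commutative simple algebra in a finite tensor category is connected. The argument: for any $f\in\mathrm{End}_{\B_A}(A)=\Hom_\B(\unit,A)$, the image and kernel of $f$ are two-sided ideals, using commutativity to make $A$-module maps bimodule maps. Hence $f$ is either zero or an isomorphism, so $\mathrm{End}_{\B_A}(A)$ is a finite-dimensional division algebra over the algebraically closed field $\kk$, i.e. $\kk$.

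Next we show that $\B_A$ is finite abelian and that $\otimes_A$ is exact and rigid. Finiteness is clear, since $\B_A$ is the category of modules over the monad $-\otimes A$ on the finite category $\B$. For rigidity and exactness we cite the standard results \cite[Section~7.8 and Exercise~8.8]{etingof2016tensor} (or \cite{shimizu2026commutative}). Exactness of $\otimes_A$ is the delicate point in the non-semisimple setting. We would argue it via the identification of $\B_A$ with the dual category of the $\B$-module category $\B_A$, which is exact because $A$ is simple. Then $\B_A^{\loc}$ is a full subcategory closed under $\otimes_A$, duals and subquotients, so it is a tensor subcategory. The braiding induced from $\B$ (Pareigis) makes it a braided finite tensor category.

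For part (2), take a subalgebra $i\colon L\hookrightarrow A$ and an ideal $I\subseteq L$. We plan to show that $I\otimes A\to A$, $x\otimes a\mapsto i(x)a$, has image $J$ an ideal of $A$, which is immediate from commutativity and associativity. Simplicity of $A$ then gives $J=0$ or $J=A$. If $J=0$, then $i(I)=i(I)\cdot 1=0$, and since $i$ is monic, $I=0$.

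If $J=A$, we need to show $I=L$, and this is the main obstacle. Our approach is to use the right adjoint of the free functor: $A$ is an $L$-module, and $\unit\to A$ factors through $J=I\cdot A$. Finiteness plus exactness of $-\otimes_L A$ (faithful flatness of $A$ over $L$) would then give $I\otimes_L A=A$, hence $(L/I)\otimes_L A=0$, hence $L/I=0$. To secure faithful flatness we would appeal to the fact that $\B_L$ is a tensor category once $L$ is connected (connectedness is inherited, since $\Hom(\unit,L)\subseteq\Hom(\unit,A)=\kk$). The induction functor $-\otimes_L A\colon\B_L\to\B_A$ is then a tensor functor between finite tensor categories, hence exact and faithful, so it kills no nonzero object. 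This finishes the argument.
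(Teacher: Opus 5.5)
Your part (1) follows the paper's route: connectedness, then exactness of $A$, then \cite[Theorem~5.5]{shimizu2026commutative}. Your ideal argument for $\Hom_{\B}(\unit,A)\cong\kk$ is correct. Two corrections are needed. First, ``exact because $A$ is simple'' is not a formality; it is Coulembier's theorem \cite[Theorem~7.1]{coulembier2025simple} and should be cited. Second, $\B_A$ is not identified with the dual category; it only embeds, via the braiding, into that category of $A$-bimodules. Your opening claim that connectedness alone makes $\B_A$ multitensor is false, for the reason explained next.

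The genuine gap is in part (2), at ``$\B_L$ is a tensor category once $L$ is connected''. The unit object of a tensor category is simple \cite[Section~4.3]{etingof2016tensor}. For commutative $L$, the ideals of $L$ are exactly the subobjects of the unit $L$ of $\B_L$, since right ideals are two-sided by commutativity. So your claim would make every connected commutative algebra simple, and that is false. Take $L=\kk[x]/(x^2)$ in $\operatorname{Rep}(\mathbb{Z}/2)$ with $\operatorname{char}\kk\neq 2$ and $x$ spanning the sign representation. Then $\Hom(\unit,L)\cong\kk$, but $(x)$ is a nonzero proper ideal.

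What you actually need is that $L$ is exact. You cannot get this as you did for $A$, because there exactness came from simplicity, and simplicity is the goal for $L$. The paper obtains exactness of the subalgebra from \cite[Proposition~5.6]{ostrik2026non}. It then passes from connected to indecomposable \cite[Lemma~5.8]{shimizu2026commutative} and from there to simple \cite[Theorem~7.1]{coulembier2025simple}.

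Note also that once $L$ is known to be exact and connected, $\B_L$ is a tensor category, so $L$, being its unit, is simple at once. Your faithful-flatness argument with $J=I\cdot A$ is valid given that input, but it is unnecessary. The entire content of part (2) sits in the step you asserted without proof.
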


\begin{proof}
By \cite[Theorem~7.1]{coulembier2025simple}, the algebra $A$ is exact.
Moreover, $\Hom_{\B}(\unit,A)\cong\kk$ by
\cite[Lemma~5.9]{shimizu2026commutative}. Part~(1) now follows from
\cite[Theorem~5.5]{shimizu2026commutative}.

Let $K\subseteq A$ be a subalgebra. By
\cite[Proposition~5.6]{ostrik2026non}, the algebra $K$ is exact. Since the
monomorphism $K\to A$ preserves the unit, we have
$\Hom_{\B}(\unit,K)\cong\kk$. The commutative exact algebra $K$ is
indecomposable by \cite[Lemma~5.8]{shimizu2026commutative}, and hence simple
by
\cite[Theorem~7.1]{coulembier2025simple}.
\end{proof}

\begin{lemma}\label{lem:transparent-muger-center}
Let $K$ be a simple transparent algebra in $\B$. Then
$\Z_2(\B_K)=\bigl(\Z_2(\B)\bigr)_K$ as full braided subcategories of
$\B_K$.
\end{lemma}

\begin{proof}
Every object of $\bigl(\Z_2(\B)\bigr)_K$ is transparent in $\B_K$, since
its underlying object has trivial double braiding with every object of
$\B$. Hence
$\bigl(\Z_2(\B)\bigr)_K\subseteq\Z_2(\B_K)$.

Conversely, let $M\in\Z_2(\B_K)$. For $X\in\B$, consider the free module
$F_K(X)=X\otimes K$. Under the canonical identifications
\[
  M\otimes_K F_K(X)\cong M\otimes X,
  \qquad
  F_K(X)\otimes_K M\cong X\otimes M,
\]
the double braiding of $M$ with $F_K(X)$ is induced by the double braiding
of the underlying object of $M$ with $X$ in $\B$. Since $M$ is transparent
in $\B_K$, this double braiding is the identity. This holds for every
$X\in\B$, so the underlying object of $M$ belongs to $\Z_2(\B)$. Thus
$M\in\bigl(\Z_2(\B)\bigr)_K$.
\end{proof}

The free-module functor $F_A:\B\to\B_A$, $X\mapsto X\otimes A$, is
surjective. The standard dimension formula for a surjective tensor functor
gives
\begin{equation}\label{eq:module-category-dimension}
  \FP(\B_A)=\frac{\FP(\B)}{\FP_{\B}(A)};
\end{equation}
see \cite[Lemma~6.2.4]{etingof2016tensor}.


Let $i:\C\hookrightarrow\D$ be the inclusion of a tensor subcategory of a
finite tensor category. For $X\in\D$, write
\[
  X\cap\C:=i i^{\ra}(X).
\]
The counit identifies $X\cap\C$ with the
largest subobject of $X$ that belongs to $\C$. In particular, if $A$ is an
algebra in a braided finite tensor category $\B$, then
\[
  A':=A\cap\Z_2(\B)
\]
is an algebra in $\Z_2(\B)$, and its counit map $A'\hookrightarrow A$ is a
monic algebra map.


\section{Proof of the Main Theorem}
\label{sec:main-theorem}

Let $\B$ be a braided finite tensor category and $A$ a commutative simple algebra in $\B$. 
Set $\D:=\im(F_A^-)$ and consider its M\"uger centralizer 
inside $\Z(\B_A)$. Then
\cite[Theorem~5.13]{shimizu2026commutative} gives a braided equivalence\footnote{We use \cite[Theorem~5.13]{shimizu2026commutative} with the correction that the functor $G$ in its proof there is replaced by $\overline{\mathcal B}\to\mathcal Z(\mathcal B)$ and the functor $G_A$ by the induced functor $\overline{\B}\to\Z(\B_A)$. With this replacement, the proof applies verbatim and yields \eqref{eq:centralizer-of-lift}.}
\begin{equation}\label{eq:centralizer-of-lift}
  \B_A^{\loc}
  \simeq
  \Z_2(\D\subseteq\Z(\B_A)).
\end{equation}


\subsection{Proof of Theorem~\ref{thm:main}(1)}
Let $\U:\Z(\B_A)\to\B_A$ be the forgetful functor and let $\U^{\ra}$ be its
right adjoint. Set $K:=(F_A^-)^\ra(\unit)$.
Since $\U\circ F_A^-=F_A$ as tensor functors, the adjunctions
give an algebra map
\[
  k:K\longrightarrow (F_A^-)^{\ra}(\U^{\ra}(A))
  \cong F_A^{\ra}(A)
  \cong A.
\]
The morphism $\unit_{\Z(\B_A)}\to\U^{\ra}(A)$ corresponding to $\id_A$ is
nonzero. It is monic because the tensor unit of $\Z(\B_A)$ is simple. The
right adjoint $(F_A^-)^{\ra}$ is left exact, so $k$ is also monic.

Corestricting $F_A^-$ gives a surjective braided tensor functor  
$f:\oB\longrightarrow\D$. 
For every $X\in\oB$, we have
\[
\begin{aligned}
  \Hom_{\D}(f(X),\unit_{\D})
  &=\Hom_{\Z(\B_A)}(F_A^-(X),\unit_{\Z(\B_A)})
  \\
  &=\Hom_{\oB}(X,K).
\end{aligned}
\]
Thus $f^{\ra}(\unit_{\D})\cong K$. By the comparison theorem
\cite[Proposition~6.6]{shimizu2026commutative}, there is a tensor equivalence
$\D\simeq(\oB)_K$ under which $f$ becomes the free-module functor. This
free-module functor is braided, so
\cite[Lemma~5.10]{shimizu2026commutative} gives
$K\in\Z_2(\oB)=\Z_2(\B)$. The map $k$ therefore identifies $K$ with a
transparent subalgebra of $A$. Hence $K\subseteq A'$.

Let $j:A'\hookrightarrow A$ be the canonical algebra map. Under
the free-module adjunction, it gives
\[
  q_{A'}=m_A\circ(j\otimes\id_A):F_A(A')\longrightarrow A.
\]
Since $A'\in\Z_2(\B)$, the double braiding of $A'$ with every object of $\B$
is the identity. The restricted $A'$-action on every $M\in\B_A$ is therefore
local. Lemma~\ref{lem:minus-half-braiding-locality} shows that $q_{A'}$ is a
morphism in $\Z(\B_A)$ from $F_A^-(A')$ to the tensor unit.

Apply Lemma~\ref{lem:alg-map-factorization} to
\[
  \B\xrightarrow{F_A^-}\Z(\B_A)\xrightarrow{\U}\B_A,
\]
where $F_A^-$ is regarded as a tensor functor from the common underlying
monoidal category of $\B$ and $\oB$. The fact that $q_{A'}$ is the image
under $\U$ of a morphism in $\Z(\B_A)$ implies that $j$ factors through
$k$. Hence $A'\subseteq K$, and therefore $K\cong A'$ as subalgebras of
$A$.

It remains to identify the braiding on the image. Since
$A'\in\Z_2(\B)$, we have $c_{A',M}=c_{M,A'}^{-1}$ for every $M\in\B$.
Thus the monoidal category $(\oB)_{A'}$ has the same
underlying monoidal structure as $\B_{A'}$, while its braiding is the
reversed braiding. Transporting the braiding of $\D$ along the comparison
equivalence makes the surjective free-module functor braided. This determines
the braiding on $(\oB)_{A'}$, so the comparison equivalence is braided and 
$\D\simeq\overline{\B_{A'}}$. \qed


\subsection{Proof of Theorem~\ref{thm:main}(2)}
By \eqref{eq:centralizer-of-lift}, we have
$\B_A^{\loc}\simeq\Z_2(\D\subseteq\Z(\B_A))$. The Drinfeld center
$\Z(\B_A)$ is factorizable
\cite[Proposition~8.6.3]{etingof2016tensor}, and hence it is nondegenerate
\cite[Theorem~4.2]{shimizu2019non}.
The centralizer dimension formula
\cite[Theorem~4.9]{shimizu2019non} therefore gives
\[
  \FP(\B_A^{\loc})\FP(\D)
  =
  \FP(\Z(\B_A)).
\]
Moreover, $\FP(\Z(\B_A))=\FP(\B_A)^2$ 
by \cite[Theorem~7.16.6]{etingof2016tensor}. Using
\eqref{eq:module-category-dimension}, we obtain
\[
  \FP(\B_A^{\loc})
  =
  \frac{\FP(\B)^2}{\FP_{\B}(A)^2\FP(\D)}.
\]

Part~(1) gives $\D\simeq\overline{\B_{A'}}$. Applying
\eqref{eq:module-category-dimension} to $A'$ yields
\[
  \FP(\D)
  =
  \frac{\FP(\B)}{\FP_{\B}(A')}.
\]
Substitution gives
\[
  \FP(\B_A^{\loc})
  =
  \frac{\FP(\B)\FP_{\B}(A')}{\FP_{\B}(A)^2},
\]
as required. This finishes the proof. \qed

\begin{remark}
If $\B$ is nondegenerate, then $A'\cong\unit$, and the formula becomes
\[
  \FP(\B_A^{\loc})
  =
  \frac{\FP(\B)}{\FP_{\B}(A)^2}.
\]
If $\B$ is symmetric, then $A'=A$ and every $A$-module is local. The formula
then becomes
\[
  \FP(\B_A^{\loc})
  =
  \frac{\FP(\B)}{\FP_{\B}(A)}
  =
  \FP(\B_A).
\]
\end{remark}


\subsection{Proof of Theorem~\ref{thm:main}(3)}
Set
\[
  \E:=\Z_2(\D\subseteq\Z(\B_A)).
\]
The equivalence \eqref{eq:centralizer-of-lift} identifies
$\B_A^{\loc}$ with $\E$ inside the nondegenerate category $\Z(\B_A)$.
The double-centralizer theorem \cite[Theorem~4.9]{shimizu2019non} gives 
$\Z_2(\E\subseteq\Z(\B_A))=\D$. 
Hence
\[
  \Z_2(\E) = \E\cap\Z_2(\E\subseteq\Z(\B_A)) = \E\cap\D = \Z_2(\D).
\]
By Part~(1), $\D\simeq\overline{\B_{A'}}$, and therefore
$\Z_2(\B_A^{\loc}) \simeq \Z_2(\B_{A'})$. 
Since $A'$ is transparent, Lemma~\ref{lem:transparent-muger-center} gives
$\Z_2(\B_{A'})=\bigl(\Z_2(\B)\bigr)_{A'}$, 
which proves Part~(3). \qed

\begin{remark}
Lemma~\ref{lem:simple-commutative-module-categories} shows that $A'$ is simple
in $\B$. Since $\Z_2(\B)$ is a full tensor subcategory and
$A'\in\Z_2(\B)$, it is also simple as an algebra in $\Z_2(\B)$. The standard
dimension formula \cite[Lemma~5.15]{shimizu2026commutative} now gives
\begin{equation}\label{eq:muger-center-local-module-fpdim}
  \FP\bigl(\Z_2(\B_A^{\loc})\bigr)
  =
  \frac{\FP\bigl(\Z_2(\B)\bigr)}{\FP_{\B}(A')}.
\end{equation}
\end{remark}


\section{Applications}
\label{sec:applications}
As in the previous section, $\B$ is a braided finite tensor category and $A$ is a simple commutative algebra in $\B$.


\subsection{Consequences for local modules}\label{sec:consequences}
We continue to write $A'=A\cap\Z_2(\B)$.

\begin{corollary}\label{cor:local-module-nondegeneracy}
The following conditions are equivalent:
\begin{enumerate}
  \item $\B_A^{\loc}$ is nondegenerate.
  \item $\B_{A'}$ is nondegenerate.
  \item $\FP_{\B}(A')=\FP\bigl(\Z_2(\B)\bigr)$.
  \item There exist a finite affine group scheme $G$ and a symmetric tensor
  equivalence
  \[
    \Phi:\Z_2(\B)\xrightarrow{\ \sim\ }\operatorname{Rep}(G)
  \]
  such that $\Phi(A')\cong\mathcal O(G)$ as commutative algebras, where
  $\mathcal O(G)$ is the regular algebra in $\operatorname{Rep}(G)$.
\end{enumerate}
\end{corollary}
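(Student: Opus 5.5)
The plan is to prove (1)$\Leftrightarrow$(2)$\Leftrightarrow$(3) using Theorem~\ref{thm:main}(3) together with the dimension formula \eqref{eq:muger-center-local-module-fpdim}, and then to handle (4) separately via Deligne-type structure of $\Z_2(\B)$.

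For (1)$\Leftrightarrow$(2), Theorem~\ref{thm:main}(3) gives $\Z_2(\B_A^{\loc})\simeq\Z_2(\B_{A'})$. Hence one side is equivalent to $\Vect$ exactly when the other is.

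For (1)$\Leftrightarrow$(3), observe that $\Z_2(\B_A^{\loc})$ is a finite tensor category. It is equivalent to $\Vect$ if and only if its Frobenius--Perron dimension equals $1$. Indeed, a finite tensor category of FP-dimension $1$ has only one simple object, namely the unit. Its unit projective cover $P$ has $\FP(P)=\FP(\C)=1$, forcing $P=\unit$, so the category is semisimple and equals $\Vect$. By \eqref{eq:muger-center-local-module-fpdim}, $\FP(\Z_2(\B_A^{\loc}))=\FP(\Z_2(\B))/\FP_{\B}(A')$. This equals $1$ precisely when (3) holds.

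For (4)$\Rightarrow$(3), suppose $\Phi(A')\cong\mathcal O(G)$. Then $\FP_{\B}(A')=\FP(\mathcal O(G))=\dim\mathcal O(G)=|G|=\FP(\operatorname{Rep}(G))=\FP(\Z_2(\B))$. Here we use that FP-dimensions in the full subcategory $\Z_2(\B)$ agree with those in $\B$, and that FP-dimension is preserved by tensor equivalences.

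For (2)$\Rightarrow$(4), we use $\Z_2(\B_{A'})=(\Z_2(\B))_{A'}$ from Lemma~\ref{lem:transparent-muger-center}. Since $A'$ is a simple commutative algebra in $\E:=\Z_2(\B)$, the category $\E_{A'}$ is symmetric with $\E_{A'}=\Z_2(\E_{A'})=\Z_2(\B_{A'})$. If (2) holds, then $\E_{A'}\simeq\Vect$. The free-module functor $\E\to\E_{A'}\simeq\Vect$ is then a symmetric tensor functor to $\Vect$, that is, a fiber functor $\omega$. Tannakian reconstruction gives a finite group scheme $G$ with $\Phi:\E\simeq\operatorname{Rep}(G)$ such that $\omega$ corresponds to the forgetful functor. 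Under this identification, $A'\cong F_{A'}^{\ra}(\unit)$, which is the right adjoint of the forgetful functor applied to $\kk$. This is $\mathcal O(G)$ with its regular algebra structure.

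The main obstacle is twofold. First, one must check that the fiber functor is genuinely symmetric rather than super-symmetric. This holds because $\Vect$ here carries the braiding induced from $\E$ via the free-module functor, and a braided equivalence $\E_{A'}\simeq\Vect$ must be with standard $\Vect$, the only symmetric structure on $\Vect$. Second, one must verify that the identification $F_{A'}^{\ra}(\unit)\cong A'$ holds as algebras. For this I would invoke the comparison theorem \cite[Proposition~6.6]{shimizu2026commutative} as in the proof of Theorem~\ref{thm:main}(1), together with the fact that the adjoint of the forgetful functor sends $\kk$ to the regular representation algebra.
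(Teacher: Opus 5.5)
Your proposal is correct and follows essentially the paper's route: (1)$\Leftrightarrow$(2) from Theorem~\ref{thm:main}(3), (1)$\Leftrightarrow$(3) from \eqref{eq:muger-center-local-module-fpdim}, and (4) via the symmetric fiber functor $F_{A'}:\Z_2(\B)\to(\Z_2(\B))_{A'}\simeq\Vect$, Tannakian reconstruction, and matching the induced algebras $(F_{A'})^{\ra}(\unit)\cong A'$ and $\mathcal O(G)$. The differences are cosmetic: you start the Tannakian step from (2) via Lemma~\ref{lem:transparent-muger-center} rather than from (3), and you prove (4)$\Rightarrow$(3) by the direct count $\FP_{\B}(A')=\dim\mathcal O(G)=\FP(\operatorname{Rep}(G))$ instead of using $\operatorname{Rep}(G)_{\mathcal O(G)}\simeq\Vect$ together with the dimension formula.
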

\begin{proof}
The equivalence of (1) and (2) follows from
Theorem~\ref{thm:main}(3). Equation
\eqref{eq:muger-center-local-module-fpdim} shows that (1) is equivalent to
(3).

Under condition (3), we have
$\bigl(\Z_2(\B)\bigr)_{A'}\simeq\Vect$. Since $\Z_2(\B)$ is symmetric,
the free-module functor
\[
  F_{A'}:\Z_2(\B)\longrightarrow
  \bigl(\Z_2(\B)\bigr)_{A'}\simeq\Vect
\]
is a symmetric fiber functor. Tannaka reconstruction
\cite[Theorem~5.3.12]{etingof2016tensor} gives a finite-dimensional Hopf
algebra $H$ whose representation category, together with its forgetful
functor, is equivalent to $(\Z_2(\B),F_{A'})$. Since $F_{A'}$ is symmetric,
$H$ is cocommutative. Thus $H^*$ is the coordinate algebra $\mathcal O(G)$
of a finite affine group scheme $G$, and we obtain a symmetric tensor
equivalence
\[
  \Phi:\Z_2(\B)\xrightarrow{\ \sim\ }\operatorname{Rep}(G).
\]
The algebra induced by $F_{A'}$ is
$(F_{A'})^{\ra}(\unit)\cong A'$, while the algebra induced by the forgetful
fiber functor on $\operatorname{Rep}(G)$ is the regular algebra
$\mathcal O(G)$. Hence $\Phi(A')\cong\mathcal O(G)$ as commutative algebras,
which proves (4).

Conversely, suppose that (4) holds. The equivalence $\Phi$ induces a tensor
equivalence
\[
  \bigl(\Z_2(\B)\bigr)_{A'}\simeq
  \operatorname{Rep}(G)_{\mathcal O(G)}\simeq\Vect.
\]
The dimension formula then gives (3).
\end{proof}

If $A'\cong\unit$, Theorem~\ref{thm:main}(3) gives
$\Z_2(\B_A^{\loc})\simeq\Z_2(\B)$. Thus, the condition
$A'\cong\unit$ alone does not imply that
$\B_A^{\loc}$ is nondegenerate. In this case, it is nondegenerate if and
only if $\B$ is nondegenerate.

The criterion is more general than the nondegeneracy of $\B$. 
For example, suppose that $\B = \C \boxtimes \mathrm{Rep}(G)$ for some nondegenerate braided finite tensor category $\C$ and a finite group scheme $G$. We consider the algebra $A = \unit \boxtimes \mathcal{O}(G)$. Then, since $A'=A$ and $\mathrm{Rep}(G)_{\mathcal{O}(G)} \simeq \Vect$, we have $\B_{A'} = \B_A \simeq \C$. Therefore $\B_A^{\loc}$ is nondegenerate, even though $\B$ need not be nondegenerate.

Now we go back to the setting of Corollary \ref{cor:local-module-nondegeneracy}.

\begin{corollary}\label{cor:local-module-factorization}
Suppose that the equivalent conditions in
Corollary~\ref{cor:local-module-nondegeneracy} hold. Then there is a braided
equivalence
\[
  \Z(\B_A)
  \simeq
  \overline{\B_{A'}}\boxtimes\B_A^{\loc}.
\]
\end{corollary}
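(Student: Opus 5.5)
We work inside the nondegenerate braided finite tensor category $\Z(\B_A)$ and use the two full braided tensor subcategories already in play in \S\ref{sec:main-theorem}: $\D=\im(F_A^-)$ and its M\"uger centralizer $\E=\Z_2(\D\subseteq\Z(\B_A))$. By \eqref{eq:centralizer-of-lift}, $\E\simeq\B_A^{\loc}$ as braided categories. By Theorem~\ref{thm:main}(1), $\D\simeq\overline{\B_{A'}}$. The target statement is then the standard Müger decomposition: if a nondegenerate braided category contains a nondegenerate tensor subcategory, it splits as that subcategory times its centralizer.

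First we check that both pieces are nondegenerate. By hypothesis (Corollary~\ref{cor:local-module-nondegeneracy}(1),(2)), $\B_A^{\loc}$ is nondegenerate, hence so is $\E$. Alternatively, $\Z_2(\D)\simeq\Z_2(\B_{A'})$ is trivial by condition (2), so $\D$ is nondegenerate as well. In the proof of Part~(3) we saw $\Z_2(\E)=\E\cap\D=\Z_2(\D)$, so $\E\cap\D\simeq\Vect$.

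Next we build the comparison functor. The tensor product of $\Z(\B_A)$ gives a functor
\[
  T:\D\boxtimes\E\longrightarrow\Z(\B_A),\qquad X\boxtimes Y\mapsto X\otimes Y.
\]
Since every object of $\E$ centralizes every object of $\D$, the braiding supplies the monoidal structure, and $T$ is a braided tensor functor from the Deligne product with the product braiding. The domain $\D\boxtimes\E$ is nondegenerate, because $\Z_2(\D\boxtimes\E)\simeq\Z_2(\D)\boxtimes\Z_2(\E)\simeq\Vect$. This uses the Deligne-product formula for M\"uger centers announced in the introduction; if that result is not yet available at this point, we instead argue directly that $T$ is injective (fully faithful), as below. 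A braided tensor functor out of a nondegenerate braided finite tensor category is injective; this is the non-semisimple analogue of the usual fact, and we would cite \cite{shimizu2019non} for it. Injectivity follows because the kernel-type data of $T$ lies in the M\"uger center of the source, which is trivial.

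Finally we compare dimensions to get an equivalence. Using \cite[Theorem~4.9]{shimizu2019non} as in the proof of Part~(2), we have $\FP(\D)\FP(\E)=\FP(\Z(\B_A))$, and therefore $\FP(\D\boxtimes\E)=\FP(\Z(\B_A))$. An injective tensor functor between finite tensor categories of equal Frobenius--Perron dimension is an equivalence \cite[Proposition~6.3.4]{etingof2016tensor}. So $T$ is a braided equivalence, and substituting $\D\simeq\overline{\B_{A'}}$ and $\E\simeq\B_A^{\loc}$ gives $\Z(\B_A)\simeq\overline{\B_{A'}}\boxtimes\B_A^{\loc}$.

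The main obstacle is the injectivity of $T$ in the non-semisimple setting. This is where the nondegeneracy of both factors, with their intersection being $\Vect$, must be converted into full faithfulness. We would try first to invoke the factorization theorem for nondegenerate subcategories from \cite{shimizu2019non}, or the analogous result of Laugwitz–Walton type, directly. Failing that, we would deduce fullness from the fact that the adjoint-algebra data $T^{\ra}(\unit)$ lies in $\Z_2(\D\boxtimes\E)$ and is therefore trivial.
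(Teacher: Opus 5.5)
Your proposal is correct, but it takes a different route from the paper. The paper's proof is one line. Once Theorem~\ref{thm:main}(1) shows that $\D\simeq\overline{\B_{A'}}$ is nondegenerate, and \eqref{eq:centralizer-of-lift} identifies its centralizer with $\B_A^{\loc}$, the paper invokes the non-semisimple M\"uger decomposition theorem \cite[Theorem~4.17]{laugwitz2022relative}. You instead reprove that decomposition in this instance, using only tools the paper already has:
\begin{enumerate}
  \item The tensor-product functor $T:\D\boxtimes\E\to\Z(\B_A)$ is braided because the two factors centralize each other.
  \item Its source is nondegenerate by Theorem~\ref{thm:muger-center-deligne-product}. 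That theorem's proof does not use this corollary, so there is no circularity.
  \item $T$ is fully faithful. By \cite[Proposition~6.6]{shimizu2026commutative} and \cite[Lemma~5.10]{shimizu2026commutative}, $T^{\ra}(\unit)$ is a transparent algebra with $\Hom(\unit,T^{\ra}(\unit))\cong\kk$, hence it is $\cong\unit$. This is exactly the argument in Lemma~\ref{lem:relative-positive-lift}, so you can cite it internally rather than relying on \cite{shimizu2019non}.
  \item The centralizer dimension formula gives $\FP(\D\boxtimes\E)=\FP(\Z(\B_A))$.
\end{enumerate}
The paper's route is shorter but outsources the decomposition. Yours is self-contained and exhibits the equivalence explicitly as $X\boxtimes Y\mapsto X\otimes Y$.

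Two small corrections:
\begin{enumerate}
  \item Your fallback ``in case the Deligne-product formula is unavailable'' is not actually independent of that formula. The injectivity argument still needs $\Z_2(\D\boxtimes\E)\simeq\Vect$.
  \item The paper cites \cite[Proposition~6.3.3]{etingof2016tensor} for the fully faithful, equal-dimension criterion and Proposition~6.3.4 for the surjective one. Your final step should cite the former.
\end{enumerate}
The observation $\E\cap\D\simeq\Vect$ is true but plays no role in your argument.
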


\begin{proof}
Theorem~\ref{thm:main}(1) identifies $\D$ with
$\overline{\B_{A'}}$, so the hypothesis says that $\D$ is nondegenerate.
By \eqref{eq:centralizer-of-lift},
\[
  \Z_2(\D\subseteq\Z(\B_A))\simeq\B_A^{\loc}.
\]
The result follows from
\cite[Theorem~4.17]{laugwitz2022relative}.
\end{proof}

\begin{corollary}\label{cor:local-module-numerical-consequences}
The following statements hold.
\begin{enumerate}
  \item We have
  \[
    \FP_{\B}(A)^2
    \leq
    \FP(\B)\FP_{\B}(A').
  \]
  Equality holds if and only if $\B_A^{\loc}\simeq\Vect$.
  \item If $\B$ is integral, then
  \[
    \frac{\FP(\B)\FP_{\B}(A')}{\FP_{\B}(A)^2}
  \]
  is an integer.
\end{enumerate}
\end{corollary}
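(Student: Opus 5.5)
Both parts come from the dimension formula of Theorem~\ref{thm:main}(2),
\[
  \FP(\B_A^{\loc})=\frac{\FP(\B)\FP_{\B}(A')}{\FP_{\B}(A)^2},
\]
together with two standard facts about the braided finite tensor category $\B_A^{\loc}$ supplied by Lemma~\ref{lem:simple-commutative-module-categories}(1).

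For Part~(1), we use that every finite tensor category $\C$ satisfies $\FP(\C)\geq 1$. Indeed, $\FP(\C)=\sum_i \FP_{\C}(X_i)\FP_{\C}(P_i)$, summed over the simple objects $X_i$ with projective covers $P_i$. This sum contains the term for $\unit$, namely $\FP_{\C}(P_{\unit})\geq \FP_{\C}(\unit)=1$, and all other terms are nonnegative.

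Applying this to $\C=\B_A^{\loc}$ and multiplying the dimension formula through by $\FP_{\B}(A)^2>0$ gives the inequality $\FP_{\B}(A)^2\leq\FP(\B)\FP_{\B}(A')$. Equality holds exactly when $\FP(\B_A^{\loc})=1$. The remaining step is to show that a finite tensor category of Frobenius--Perron dimension $1$ is equivalent to $\Vect$. If $\FP(\C)=1$, the term for $\unit$ alone forces $\FP_{\C}(P_{\unit})=1$, and every other term must vanish. Every simple object has $\FP$-dimension at least $1$ and every projective cover has positive dimension, so there can be no simple object other than $\unit$. Moreover $P_{\unit}$ has length one, so $P_{\unit}=\unit$ and $\C$ is semisimple with a single simple object, the unit. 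Since $\kk$ is algebraically closed, $\C\simeq\Vect$. Conversely, $\FP(\Vect)=1$ gives equality. The only slightly delicate point is this characterization; alternatively one may cite \cite[Section~6.1]{etingof2016tensor}.

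For Part~(2), we show that if $\B$ is integral then $\B_A^{\loc}$ is integral; then $\FP(\B_A^{\loc})$, being a sum of products of integers, is an integer, and the dimension formula gives the claim. Step one: $\B_A$ is integral. The free-module functor $F_A\colon\B\to\B_A$ is surjective, so every simple object of $\B_A$ is a subquotient of some $F_A(X)$. A surjective tensor functor preserves Frobenius--Perron dimensions, so $\FP_{\B_A}(F_A(X))=\FP_{\B}(X)$ is an integer. The Grothendieck ring of $\B_A$ is therefore generated, up to subquotients, by classes of integer dimension. To conclude that every simple has integer dimension, we use the characterization of integrality by $\FP(\C)$ being an integer together with $\FP$ of each simple lying in an algebraic integer orbit \cite[Proposition~6.1.14 / Section~6.2]{etingof2016tensor}. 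Alternatively, integrality is equivalent to $\C$ being the representation category of a quasi-Hopf algebra, and this property passes to quotient categories via \cite[Corollary~6.2.5]{etingof2016tensor}. Step two: $\B_A^{\loc}$ is a full tensor subcategory of $\B_A$, whose objects keep the same dimensions, so $\B_A^{\loc}$ is integral as well.

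We expect Step one, the integrality of $\B_A$, to be the main obstacle. The first approach to try is the cited fact that the image of an integral category under a surjective tensor functor is integral.
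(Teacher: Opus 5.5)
Part~(1) is the paper's argument, and your check that a finite tensor category has $\FP\geq 1$, with equality only for $\Vect$, is correct. Part~(2) follows the paper's outline: integrality of $\B_A^{\loc}$ forces $\FP(\B_A^{\loc})\in\mathbb{Z}$. The paper gets that integrality by quoting \cite[Lemma~5.16]{shimizu2026commutative}; your Step one, which is meant to supply it, is not established.

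The characterization you invoke is not a characterization of integrality. $\FP(\C)\in\mathbb{Z}$ only detects weak integrality: an Ising category has $\FP=4$ and only algebraic-integer dimensions, yet it has a simple object of dimension $\sqrt{2}$.

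The quasi-Hopf reformulation does not help by itself either. A quasi-fiber functor on $\B$ does not induce one on $\B_A$, because forgetting the $A$-action neither turns $\otimes_A$ into $\otimes$ nor sends the unit $A$ to $\unit$. So the claim that integrality passes to quotients is exactly what you would need to prove.

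Finally, knowing that each simple $A$-module is a composition factor of some $F_A(X)$ of integer dimension does not give integrality without extra input. One such input is the bound $|\sigma(\FP(S))|\leq\FP(S)$ for Galois conjugates $\sigma$, which holds because they are eigenvalues of the nonnegative integer matrix of $S\otimes-$. That bound is what forces composition factors of an object of rational dimension to have rational dimension.

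The missing idea is that a rational algebraic integer lies in $\mathbb{Z}$, and it makes Step one unnecessary. If $\B$ is integral, then $\FP(\B)$, $\FP_{\B}(A)$ and $\FP_{\B}(A')$ are positive integers, so the quotient in Part~(2) is rational. By Theorem~\ref{thm:main}(2) it equals $\FP(\B_A^{\loc})=\sum_i\FP(X_i)\FP(P_i)$. This is an algebraic integer, because every Frobenius--Perron dimension of an object is one. Hence the quotient is an integer, and no integrality statement about $\B_A$ or $\B_A^{\loc}$ is needed.

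If you prefer to keep your route, close Step one the same way. For $M\in\B_A$, the dimension of its underlying object satisfies $\FP_{\B}(M)=\FP_{\B_A}(M)\,\FP_{\B}(A)$. Indeed, both sides are positive additive functions on $\B_A$ that get multiplied by $\FP_{\B}(X)$ under $F_A(X)\otimes_A-$. Taking $X$ to be a projective generator of $\B$ makes the matrix of this operator strictly positive, so Perron--Frobenius makes the two functions proportional. Thus each $\FP_{\B_A}(M)$ is a rational algebraic integer, hence an integer, and your Step two then finishes the proof.
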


\begin{proof}
Part~(1) follows from Theorem~\ref{thm:main}(2), since a finite tensor
category has Frobenius--Perron dimension at least one, with equality precisely
for $\Vect$. If $\B$ is integral, then $\B_A^{\loc}$ is integral by
\cite[Lemma~5.16]{shimizu2026commutative}. Its Frobenius--Perron dimension is
therefore an integer, which proves Part~(2).
\end{proof}

\begin{corollary}\label{cor:local-module-sharp-dimension-bound}
We have $\FP_{\B}(A)^2\FP\bigl(\Z_2(\B)\bigr) \leq \FP(\B)\FP_{\B}(A')^2$.
Equality holds if and only if $\B_A^{\loc}$ is symmetric.
In particular,
\[
  \FP_{\B}(A)
  \leq
  \sqrt{\FP(\B)\FP\bigl(\Z_2(\B)\bigr)}.
\]
\end{corollary}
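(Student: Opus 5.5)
We will compare $\FP(\B_A^{\loc})$ with $\FP(\Z_2(\B_A^{\loc}))$. Theorem~\ref{thm:main}(2) gives
\[
  \FP(\B_A^{\loc})=\frac{\FP(\B)\FP_{\B}(A')}{\FP_{\B}(A)^2},
\]
and equation~\eqref{eq:muger-center-local-module-fpdim} gives
\[
  \FP\bigl(\Z_2(\B_A^{\loc})\bigr)=\frac{\FP(\Z_2(\B))}{\FP_{\B}(A')}.
\]
Since $\Z_2(\B_A^{\loc})$ is a tensor subcategory of $\B_A^{\loc}$, we have $\FP(\Z_2(\B_A^{\loc}))\le\FP(\B_A^{\loc})$. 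For a tensor subcategory of a finite tensor category, the Frobenius--Perron dimension of the subcategory divides that of the ambient category (\cite[Theorem~6.3.1]{etingof2016tensor}, or the corresponding statement in the nonsemisimple setting), and in particular is at most it.

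Substituting the two formulas into this inequality gives
\[
  \frac{\FP(\Z_2(\B))}{\FP_{\B}(A')}\le\frac{\FP(\B)\FP_{\B}(A')}{\FP_{\B}(A)^2}.
\]
Clearing the positive denominators yields $\FP_{\B}(A)^2\FP(\Z_2(\B))\le\FP(\B)\FP_{\B}(A')^2$, which is the claimed inequality.

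For the equality case, we need that $\FP$ of a tensor subcategory equals $\FP$ of the ambient finite tensor category only when the two coincide. This is the one nontrivial input. I would justify it as follows: if $\cA\subseteq\C$ is a tensor subcategory with $\FP(\cA)=\FP(\C)$, then the regular object of $\C$ restricted against $\cA$ shows that every projective cover in $\C$ lies in $\cA$, hence every object of $\C$, being a quotient of a projective, lies in $\cA$. Alternatively, one can cite the corresponding statement in \cite{etingof2016tensor}. Granting this, equality holds if and only if $\Z_2(\B_A^{\loc})=\B_A^{\loc}$, that is, if and only if $\B_A^{\loc}$ is symmetric.

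For the ``in particular'' bound, we use $A'\subseteq A$ together with $A'\in\Z_2(\B)$. Then $\FP_{\B}(A')\le\FP_{\B}(A)$, because $\FP$ is additive on composition series, so a subobject has dimension at most that of the whole. Also $\FP_{\B}(A')=\FP_{\Z_2(\B)}(A')$, since $\Z_2(\B)$ is a full tensor subcategory. By Equation~\eqref{eq:module-category-dimension} applied inside $\Z_2(\B)$, we have $\FP(\Z_2(\B))/\FP_{\B}(A')=\FP((\Z_2(\B))_{A'})\ge 1$, so $\FP_{\B}(A')\le\FP(\Z_2(\B))$. Using this once for one factor of $\FP_{\B}(A')^2$ in the main inequality gives
\[
  \FP_{\B}(A)^2\FP(\Z_2(\B))\le\FP(\B)\FP_{\B}(A')^2\le\FP(\B)\FP(\Z_2(\B))\FP_{\B}(A').
\]
Dividing by $\FP(\Z_2(\B))$ gives $\FP_{\B}(A)^2\le\FP(\B)\FP_{\B}(A')$. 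Bounding $\FP_{\B}(A')\le\FP(\Z_2(\B))$ once more yields $\FP_{\B}(A)^2\le\FP(\B)\FP(\Z_2(\B))$, and taking square roots finishes the proof.
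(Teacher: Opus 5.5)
Your proposal is correct and follows the paper's own argument. You combine Theorem~\ref{thm:main}(2) with \eqref{eq:muger-center-local-module-fpdim}, use that the tensor subcategory $\Z_2(\B_A^{\loc})\subseteq\B_A^{\loc}$ has Frobenius--Perron dimension at most that of $\B_A^{\loc}$ with equality exactly when they coincide, and then bound $\FP_{\B}(A')\le\FP(\Z_2(\B))$. One citation should change: the subcategory inequality with its equality case is the injective-functor statement the paper itself uses as \cite[Proposition~6.3.3]{etingof2016tensor}, since divisibility in the algebraic-integer sense does not by itself imply ``at most''.
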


\begin{proof}
Theorem~\ref{thm:main}(2) and
\eqref{eq:muger-center-local-module-fpdim} give
\[
  \frac{
    \FP(\B_A^{\loc})
  }{
    \FP\bigl(\Z_2(\B_A^{\loc})\bigr)
  }
  =
  \frac{
    \FP(\B)\FP_{\B}(A')^2
  }{
    \FP_{\B}(A)^2\FP\bigl(\Z_2(\B)\bigr)
  }.
\]
Since the M\"uger center is a tensor subcategory of $\B_A^{\loc}$, the
left-hand side is at least one. Equality holds precisely when
\[
  \Z_2(\B_A^{\loc})=\B_A^{\loc},
\]
which means that $\B_A^{\loc}$ is symmetric. Finally,
$\FP_{\B}(A')\leq\FP\bigl(\Z_2(\B)\bigr)$ gives the last bound.
\end{proof}

A \emph{modular tensor category} is a nondegenerate ribbon finite tensor
category. We regard $A^*$ as a right $A$-module via the action dual to the
left regular action on $A$.
\begin{corollary}\label{cor:local-module-modularity}
Suppose that $\B$ is ribbon, $\theta_A=\id_A$, and
$A^*\in\Z_2(\B_A^{\loc})$. If $\B_{A'}$ is nondegenerate, then
$\B_A^{\loc}$ is a modular tensor category. 
In particular, the conclusion holds if $A$ is symmetric Frobenius and
$\B_{A'}$ is nondegenerate.
\end{corollary}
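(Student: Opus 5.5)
The plan is to show three things separately: $\B_A^{\loc}$ is nondegenerate, it is braided finite tensor, and it carries a ribbon structure. Modularity, as defined above, is then the conjunction of these.

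\textbf{Nondegeneracy and finiteness.} Corollary~\ref{cor:local-module-nondegeneracy} says that nondegeneracy of $\B_{A'}$ is equivalent to nondegeneracy of $\B_A^{\loc}$. So this part is immediate from the hypothesis, and Lemma~\ref{lem:simple-commutative-module-categories}(1) gives that $\B_A^{\loc}$ is a braided finite tensor category.

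\textbf{Ribbon structure.} The substantive work is to produce a twist on $\B_A^{\loc}$ compatible with its braiding and duality.

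1. First I will descend the twist of $\B$. Since $\theta_A=\id_A$, naturality of $\theta$ together with $\theta_{M\otimes A}=c_{A,M}c_{M,A}(\theta_M\otimes\theta_A)$ and locality of $M$ show that $\theta_M$ commutes with the action $a^M$. Hence $\theta_M$ is an $A$-module endomorphism for every local module $M$, and we set $\theta^{\loc}_M:=\theta_M$.

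2. The balancing identity $\theta_{M\otimes_A N}=c_{N,M}c_{M,N}(\theta_M\otimes_A\theta_N)$ should follow from the corresponding identity in $\B$. The reason is that $M\otimes_A N$ is a quotient of $M\otimes N$, and the braiding of $\B_A^{\loc}$ is induced from that of $\B$.

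3. The remaining condition is the duality one, $\theta_{M^\vee}=(\theta_M)^\vee$, where $M^\vee$ is the dual of $M$ in $\B_A$. This is where I expect the main obstacle, because the dual in $\B_A$ is not simply $M^*$ with some action. Rather, it is $\underline{\Hom}_A(M,A)$, which is built from $M^*$ and $A$, or equivalently it is $M^*\otimes_A A^{*}$ up to the distinguished invertible object. I would handle this as follows.

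(a) Express the left and right duals in $\B_A$ in terms of $M^*$ and the module $A^*$. When $A$ is not Frobenius, the left and right duals differ by tensoring with $A^*$, whose module structure comes from the dual of the left regular action.

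(b) The hypothesis $A^*\in\Z_2(\B_A^{\loc})$ ensures that $A^*$ is a local module. Tensoring with it therefore stays inside $\B_A^{\loc}$, and its double braiding with everything is trivial. Then I would use $\theta_{A^*}=\theta_A^*=\id$, which holds since $\B$ is ribbon. Combining these, the twist on $\B$ transported to duals in $\B_A^{\loc}$ agrees with the dual of the twist, by the same balancing computation as in step 2. In particular, the pivotal structure induced from $\B$ (via $\theta$ and the Drinfeld isomorphism) is compatible with the $A^*$-twisting, so $\theta^{\loc}$ is a ribbon twist.

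\textbf{The Frobenius case.} If $A$ is symmetric Frobenius, then $A^*\cong A$ as right $A$-modules. Since $A$ is the unit of $\B_A^{\loc}$, it lies in $\Z_2(\B_A^{\loc})$. For the twist, a symmetric Frobenius commutative algebra in a ribbon category has $\theta_A=\id_A$. This is the standard fact (Fröhlich–Fuchs–Runkel–Schweigert) obtained by comparing $\theta_A$ with the Nakayama-type automorphism relating the two Frobenius forms; symmetry together with commutativity forces this automorphism to be trivial. Hence the hypotheses of the main statement hold, and the conclusion follows.
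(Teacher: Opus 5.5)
Your nondegeneracy argument (via Corollary~\ref{cor:local-module-nondegeneracy}) and your treatment of the symmetric Frobenius case match the paper: there $A^*\cong A$ is the unit, hence transparent, and $\theta_A=\id_A$. The difference is the ribbon structure. The paper gets it in one line from \cite[Theorem~5.21(b)]{shimizu2026commutative}, whose hypotheses are exactly $\theta_A=\id_A$ and $A^*\in\Z_2(\B_A^{\loc})$. You instead try to prove that result by hand. Your steps~1 and~2 are correct, but step~3, which you rightly call the crux, rests on a wrong identification. The comparison that matters is not between left and right duals in $\B_A$. It is between the dual $M^*$ in $\B$ and the dual $M^\vee$ in $\B_A$. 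Your formula $M^\vee\simeq M^*\otimes_A A^*$ is off: at $M=A$ it would give $A\cong A^*\otimes_A A^*$, which has no reason to hold when $A$ is not Frobenius. The hedge ``up to the distinguished invertible object'' and the closing remark about pivotal structures and the Drinfeld isomorphism do not supply the missing computation. You also never say how $A^*$ is cancelled at the end.

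Your ingredients in (b) are the right ones; here is how to make them work. Give $M^*$ the right $A$-action transposed from the left action $A\otimes M\to M$ used to form $\otimes_A$. Then $\Hom_A(N,M^*)\cong\Hom_{\B}(N\otimes_A M,\unit)$. The module $A^*$ (with the action dual to the left regular action) is the value at $\unit$ of the right adjoint of the forgetful functor $\B_A\to\B$. So the same space is also $\Hom_A(N\otimes_A M,A^*)\cong\Hom_A(N,A^*\otimes_A M^\vee)$, using left duals throughout. By Yoneda, $M^*\cong A^*\otimes_A M^\vee$ naturally in $M$, with $f^*$ corresponding to $\id_{A^*}\otimes_A f^\vee$.

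Now let $M$ be local; then $M^\vee$ is local too, since $\B_A^{\loc}$ is rigid. Take $f=\theta_M$. On one side, $\theta_{M^*}=(\theta_M)^*$ in $\B$. On the other, your step~2 computation for $A^*\otimes_A M^\vee$ gives $\theta_{A^*\otimes_A M^\vee}=\id_{A^*}\otimes_A\theta_{M^\vee}$. This uses $\theta_{A^*}=(\theta_A)^*=\id$, and that the double braiding of $A^*$ with $M^\vee$ descends to the identity on $A^*\otimes_A M^\vee$. The latter is exactly where $A^*\in\Z_2(\B_A^{\loc})$ enters. Hence $\id_{A^*}\otimes_A\theta_{M^\vee}=\id_{A^*}\otimes_A(\theta_M)^\vee$. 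Tensoring with a nonzero object of the finite tensor category $\B_A$ is faithful, so $\theta_{M^\vee}=(\theta_M)^\vee$.

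With this repair your route is a self-contained proof of the cited theorem in this setting. Without it, the ribbon part should simply be cited, as the paper does.
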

\begin{proof}
By \cite[Theorem~5.21(b)]{shimizu2026commutative}, the assumptions on the
twist and on $A^*$ imply that $\B_A^{\loc}$ is ribbon. It is nondegenerate by
Corollary~\ref{cor:local-module-nondegeneracy}, and hence it is a modular
tensor category. If $A$ is symmetric Frobenius, then $\theta_A=\id_A$ by
\cite[Lemma~2.6]{shimizu2026commutative}. The Frobenius property also gives
$A^*\cong A$ as right $A$-modules. Thus $A^*$ lies in
$\Z_2(\B_A^\loc)$, so the first part applies.
\end{proof}


\subsection{Adjoint algebra and the M\"uger center}
We first prove the following result on the M\"uger center of a Deligne tensor product of braided finite tensor categories. This result is required later, but may be of independent interest.

\begin{theorem}\label{thm:muger-center-deligne-product}
Let $\C$ and $\D$ be braided finite tensor categories, and
equip $\C\boxtimes\D$ with the product braiding. Then
\[
\Z_2(\C\boxtimes\D)
\simeq
\Z_2(\C)\boxtimes\Z_2(\D).
\]
In particular, $\C\boxtimes\D$ is nondegenerate if and only if
both $\C$ and $\D$ are nondegenerate.
\end{theorem}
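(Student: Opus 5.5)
The plan is to prove the two inclusions
\[
  \Z_2(\C)\boxtimes\Z_2(\D)\subseteq\Z_2(\C\boxtimes\D)\subseteq\Z_2(\C)\boxtimes\Z_2(\D)
\]
inside $\C\boxtimes\D$, where $\Z_2(\C)\boxtimes\Z_2(\D)$ is viewed as a full tensor subcategory of $\C\boxtimes\D$ (the Deligne product of inclusions of tensor subcategories is fully faithful and closed under subquotients). For the first inclusion: the double braiding of $X\boxtimes Y$ with $X'\boxtimes Y'$ is $(c_{X',X}c_{X,X'})\boxtimes(c_{Y',Y}c_{Y,Y'})$. So if $X\in\Z_2(\C)$ and $Y\in\Z_2(\D)$, then $X\boxtimes Y$ centralizes all pure tensors. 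Every object of $\C\boxtimes\D$ is a subquotient (indeed a quotient of a projective, hence a cokernel) of direct sums of pure tensors. The condition $c_{W,V}c_{V,W}=\id$ is stable under direct sums, subquotients and extensions in the second variable, by naturality of the braiding and exactness of $\otimes$. Hence $X\boxtimes Y\in\Z_2(\C\boxtimes\D)$. Since $\Z_2(\C\boxtimes\D)$ is a tensor subcategory closed under subquotients, it contains all of $\Z_2(\C)\boxtimes\Z_2(\D)$.

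For the reverse inclusion I would use the Müger centralizer description and argue one factor at a time. First, a transparent object must centralize $\C\boxtimes\unit$ and $\unit\boxtimes\D$ separately, so
\[
  \Z_2(\C\boxtimes\D)=\Z_2(\C\boxtimes\unit\subseteq\C\boxtimes\D)\cap\Z_2(\unit\boxtimes\D\subseteq\C\boxtimes\D).
\]
I then claim $\Z_2(\C\boxtimes\unit\subseteq\C\boxtimes\D)=\Z_2(\C)\boxtimes\D$. To see this, identify $\C\boxtimes\D$ with the category of right exact functors, or equivalently realize $\D\simeq\mathrm{comod}$-$C$ for a finite-dimensional coalgebra $C$. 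Then $\C\boxtimes\D\simeq$ the category of $C$-comodules in $\C$, i.e.\ objects $V\in\C$ with a coaction $V\to V\otimes C$ (here $C$ is a vector space). For $V\boxtimes\unit$ paired with an object $W$, the double braiding with $X\boxtimes\unit$ is computed on the underlying $\C$-object of $W$ as $c_{X,W}c_{W,X}$, since the $\D$-part braids with $\unit$ trivially. Thus $W$ centralizes $\C\boxtimes\unit$ iff its underlying $\C$-object lies in $\Z_2(\C)$. The comodules in $\C$ whose underlying object lies in the subcategory $\Z_2(\C)$ are exactly $\Z_2(\C)\boxtimes\D$. Symmetrically, $\Z_2(\unit\boxtimes\D\subseteq\C\boxtimes\D)=\C\boxtimes\Z_2(\D)$.

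The final step is the intersection identity $(\Z_2(\C)\boxtimes\D)\cap(\C\boxtimes\Z_2(\D))=\Z_2(\C)\boxtimes\Z_2(\D)$ for tensor (hence Serre-type, subquotient-closed) subcategories. I expect this step, together with making the comodule model rigorous, to be the main obstacle. I would handle it by the same model. Write $\Z_2(\C)\simeq\mathrm{comod}$-$C_1$ inside $\C\simeq\mathrm{comod}$-$C_0$ with $C_1\subseteq C_0$ a subcoalgebra, and similarly $D_1\subseteq D_0$. Then $\C\boxtimes\D\simeq\mathrm{comod}$-$(C_0\otimes D_0)$, and the two subcategories are the comodules over $C_1\otimes D_0$ and over $C_0\otimes D_1$ respectively. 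A comodule lies in both iff its coefficient space lies in $(C_1\otimes D_0)\cap(C_0\otimes D_1)=C_1\otimes D_1$, which is linear algebra for subspaces of a tensor product. This gives the equality.

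The nondegeneracy statement then follows at once. $\Z_2(\C)\boxtimes\Z_2(\D)\simeq\Vect$ iff both factors are $\Vect$, since $\FP$ is multiplicative under $\boxtimes$ and $\FP=1$ characterizes $\Vect$.
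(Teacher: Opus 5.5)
Your proposal is correct. Its skeleton is the same as the paper's: transparency in $\C\boxtimes\D$ means centralizing both $\C\boxtimes\unit$ and $\unit\boxtimes\D$. You prove the two key identifications by a different mechanism, however.

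The paper never describes the partial centralizer $\Z_2(\C\boxtimes\unit\subseteq\C\boxtimes\D)$ explicitly. It records the easy inclusion $\Z_2(\C)\boxtimes\D\subseteq\Z_2(\C\boxtimes\unit\subseteq\C\boxtimes\D)$. It then computes the Frobenius--Perron dimension of the right-hand side from the centralizer dimension formula \cite[Theorem~4.9]{shimizu2019non}, using $(\C\boxtimes\unit)\cap\Z_2(\C\boxtimes\D)=\Z_2(\C)\boxtimes\unit$, and concludes equality because the dimensions agree. Next it repeats the same count for $\unit\boxtimes\D$ inside $\F=\Z_2(\C)\boxtimes\D$, which replaces your intersection step.

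You instead compute the partial centralizer exactly in a comodule model, where the double braiding with $X\boxtimes\unit$ only sees the underlying $\C$-object. You get the intersection from the identity $(C_1\otimes D_0)\cap(C_0\otimes D_1)=C_1\otimes D_1$ for subcoalgebras.

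The paper's argument is shorter and reuses the centralizer machinery that drives the rest of the paper, but it rests on a nontrivial theorem about nonsemisimple centralizers. Yours is elementary and uses no dimension theory except in the final nondegeneracy assertion. The cost is checking that the coalgebra models are compatible with the structure you use:
\begin{enumerate}
  \item $(X\boxtimes\unit)\otimes-$ and the product braiding with $X\boxtimes\unit$ correspond to $X\otimes-$ and $c_{X,-}$ on underlying objects. This follows from the universal property of $\boxtimes$ for right exact functors and natural transformations, together with $c_{\unit,Y}$ being a unit constraint.
  \item The models are natural with respect to the inclusions $\Z_2(\C)\hookrightarrow\C$ and $\Z_2(\D)\hookrightarrow\D$.
\end{enumerate}

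One small correction to your first inclusion: the centralizer condition is \emph{not} stable under extensions in general. For instance, over a field of characteristic $p$, the nondegenerate category $\Z(\mathrm{Rep}(\mathbb{Z}/p))$ has a non-split self-extension of $\unit$, and this extension is not transparent. You do not need extensions, though. Every object of $\C\boxtimes\D$ is a quotient of a projective, hence of a direct sum of objects $P\boxtimes Q$, so stability under direct sums and quotients suffices.
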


\begin{proof}
Write
\[
\E=\C\boxtimes\D
\qquad\text{and}\qquad
\cS=\C\boxtimes\unit.
\]
We use the centralizer dimension formula
\cite[Theorem~4.9]{shimizu2019non}:
\[
\FP(\cA)
\FP\bigl(\Z_2(\cA\subseteq\mathcal X)\bigr)
=
\FP(\mathcal X)
\FP\bigl(\cA\cap\Z_2(\mathcal X)\bigr)
\]
for every tensor full subcategory $\cA$ of a braided finite tensor
category $\mathcal X$.

The product braiding gives
\[
\cS\cap\Z_2(\E)
=
\Z_2(\C)\boxtimes\unit
\]
and an inclusion
\[
\Z_2(\C)\boxtimes\D
\subseteq
\Z_2(\cS\subseteq\E).
\]
Applying the dimension formula to $\cS\subseteq\E$, we obtain
\[
\begin{aligned}
\FP\bigl(\Z_2(\cS\subseteq\E)\bigr)
=
\frac{
 \FP(\E)
 \FP\bigl(\cS\cap\Z_2(\E)\bigr)
}{
 \FP(\cS)
} 
=
\FP\bigl(\Z_2(\C)\bigr)
\FP(\D).
\end{aligned}
\]
Thus, by \cite[Proposition~6.3.3]{etingof2016tensor}, the preceding inclusion is an equivalence:
\[
  \Z_2(\cS\subseteq\E)\simeq\Z_2(\C)\boxtimes\D.
\]

Set $\F = \Z_2(\C)\boxtimes\D$ and $\T=\unit\boxtimes\D$. 
Since $\cS$ and $\T$ tensor-generate $\E$, an object of
$\E$ is transparent precisely when it lies in
$\F=\Z_2(\cS\subseteq\E)$ and centralizes $\T$. Then
\[
\Z_2(\E) = \Z_2(\T\subseteq\F).
\]
Moreover, the product braiding gives
\[
\T\cap\Z_2(\F)
=
\unit\boxtimes\Z_2(\D)
\]
and
\[
\Z_2(\C)\boxtimes\Z_2(\D)
\subseteq
\Z_2(\T\subseteq\F).
\]
Applying the dimension formula inside $\F$, we find
\begin{equation*}
\FP\bigl(\Z_2(\T\subseteq\F)\bigr)
=
\frac{
  \FP(\F)\FP\bigl(\T\cap\Z_2(\F)\bigr)
}{
  \FP(\T)
}
= \FP\bigl(\Z_2(\C)\bigr) \FP\bigl(\Z_2(\D)\bigr).
\end{equation*}
The last quantity is the Frobenius--Perron dimension of
$\Z_2(\C)\boxtimes\Z_2(\D)$. Therefore the
preceding inclusion is an equality, and
\[
\Z_2(\E)
=
\Z_2(\C)\boxtimes\Z_2(\D).
\]

The final assertion follows because a braided finite tensor category is
nondegenerate precisely when its M\"uger center is equivalent to $\Vect$.
\end{proof}

Let $\B$ be a braided finite tensor category, and set
$\B^{\mathrm{env}}:=\oB\boxtimes\B$. Equip $\B^{\mathrm{env}}$ with the
product braiding. The tensor-product functor
\[
  F:\B^{\mathrm{env}}\longrightarrow\B,
  \qquad
  F(X\boxtimes Y)=X\otimes Y,
\]
has a standard central structure. Set $\bF:=F^\ra(\unit)$; this is the
\emph{adjoint algebra} associated with $F$. By
\cite[Proposition~6.6]{shimizu2026commutative}, the algebra $\bF$ is
indecomposable, exact, and commutative. It is therefore simple by
\cite[Theorem~7.1]{coulembier2025simple}.

The functor $F$ is surjective because $F(\unit\boxtimes X)\cong X$ for every
$X\in\B$. The comparison equivalence
\cite[Proposition~2.3]{etingof2004analogue} gives $(\B^{\mathrm{env}})_\bF\simeq\B$.

\begin{theorem}\label{thm:adjoint-algebra-local-modules}
There is a braided equivalence
\[
  (\B^{\mathrm{env}})_\bF^{\loc}\simeq\Z_2(\B).
\]
\end{theorem}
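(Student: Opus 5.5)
Apply Theorem~\ref{thm:main} to the braided finite tensor category $\B^{\mathrm{env}}$ and the commutative simple algebra $\bF$. Part~(3) of that theorem then gives
\[
  \Z_2\bigl((\B^{\mathrm{env}})_\bF^{\loc}\bigr)\simeq\bigl(\Z_2(\B^{\mathrm{env}})\bigr)_{\bF'},
  \qquad
  \bF':=\bF\cap\Z_2(\B^{\mathrm{env}}).
\]
By Theorem~\ref{thm:muger-center-deligne-product},
\[
  \Z_2(\B^{\mathrm{env}})=\Z_2(\oB)\boxtimes\Z_2(\B)=\overline{\Z_2(\B)}\boxtimes\Z_2(\B),
\]
and on $\Z_2(\B)$ the braiding and its reverse agree.

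\textbf{Step 1: identify $\bF'$.} Take $\bF'$ to be the adjoint algebra of the restriction
\[
  F|:\Z_2(\B)\boxtimes\Z_2(\B)\to\Z_2(\B).
\]
Denote by $i$ the inclusion of $\Z_2(\B^{\mathrm{env}})$. Since $\bF'=ii^{\ra}F^{\ra}(\unit)$, we get $\bF'\cong i(F\circ i)^{\ra}(\unit)$. Then $(\Z_2(\B)^{\boxtimes 2})_{\bF'}\simeq\Z_2(\B)$ by the comparison equivalence \cite[Proposition~2.3]{etingof2004analogue}, which applies because $F\circ i$ is surjective. Under this equivalence the free-module functor becomes the tensor product functor.

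\textbf{Step 2: compute dimensions.} We have
\[
  \FP_{\B^{\mathrm{env}}}(\bF)=\FP(\B)
\]
from \eqref{eq:module-category-dimension}, since $(\B^{\mathrm{env}})_\bF\simeq\B$. By the same argument,
\[
  \FP(\bF')=\FP(\Z_2(\B)).
\]
Theorem~\ref{thm:main}(2) then gives
\[
  \FP\bigl((\B^{\mathrm{env}})_\bF^{\loc}\bigr)
  =\frac{\FP(\B)^2\,\FP(\Z_2(\B))}{\FP(\B)^2}
  =\FP(\Z_2(\B)).
\]
The remark after Part~(3) gives
\[
  \FP\bigl(\Z_2((\B^{\mathrm{env}})_\bF^{\loc})\bigr)
  =\frac{\FP(\Z_2(\B))^2}{\FP(\Z_2(\B))}
  =\FP(\Z_2(\B)).
\]
The Müger center is a tensor subcategory with the same Frobenius--Perron dimension as the whole category, so it is everything \cite[Proposition~6.3.3]{etingof2016tensor}. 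Hence $(\B^{\mathrm{env}})_\bF^{\loc}$ is symmetric and equals its Müger center.

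\textbf{Step 3: conclude.} Combining Step~2 with Part~(3) and Step~1,
\[
  (\B^{\mathrm{env}})_\bF^{\loc}\simeq(\Z_2(\B)\boxtimes\Z_2(\B))_{\bF'}\simeq\Z_2(\B).
\]
These are braided equivalences. On the right the braiding is the one induced from the product braiding, and it transports to the braiding of $\Z_2(\B)$. This works because the tensor functor $\Z_2(\B)^{\boxtimes2}\to\Z_2(\B)$ is braided: on transparent objects $c$ and $\bar c$ coincide. The free-module functor is surjective, and, as in the proof of Part~(1), the braiding on the target is determined by making the surjective free-module functor braided.

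\textbf{Main obstacle.} The hardest step is Step~1, showing that the largest transparent subalgebra of $\bF$ is exactly the adjoint algebra of the restricted functor. This requires right adjoints to commute with the inclusions: $i^{\ra}F^{\ra}=(Fi)^{\ra}$ holds formally, and the image $i(Fi)^{\ra}(\unit)$ is then the largest transparent subobject by the description of $X\cap\C$ as $ii^{\ra}(X)$. The remaining checks are that $F\circ i$ lands in $\Z_2(\B)$, that it is surjective (via $\unit\boxtimes X\mapsto X$), and that Theorem~\ref{thm:main}(3) is used as an honest braided equivalence rather than only as a statement about dimensions.
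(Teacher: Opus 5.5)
Your proof is correct, but it obtains the braided equivalence by a different route from the paper.

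\textbf{What the two proofs share.} Both identify $\bF'$ with the adjoint algebra of the corestriction $F_0:\overline{\Z_2(\B)}\boxtimes\Z_2(\B)\to\Z_2(\B)$ of your $F|$. Both then compute $\FP\bigl((\B^{\mathrm{env}})^{\loc}_\bF\bigr)=\FP(\Z_2(\B))$ from Theorem~\ref{thm:main}(2). One small point in Step~1: the comparison theorem applies to $F_0$, not to $F\circ i$ viewed as a functor into $\B$. Passing between their adjoint algebras uses $j^{\ra}(\unit_\B)\cong\unit_{\Z_2(\B)}$, which the paper records explicitly.

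\textbf{How the paper concludes.} It never uses Part~(3). It restricts the comparison equivalence $\B\simeq(\B^{\mathrm{env}})_\bF$, given by $X\mapsto(\unit\boxtimes X)\otimes\bF$, to $\Z_2(\B)$. These free modules are local because $\unit\boxtimes X$ is transparent. This yields a fully faithful braided functor $T:\Z_2(\B)\to(\B^{\mathrm{env}})^{\loc}_\bF$, and equality of dimensions finishes the proof.

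\textbf{How you conclude.} You first show that $(\B^{\mathrm{env}})^{\loc}_\bF$ is symmetric, because its M\"uger center has full dimension. You then obtain the equivalence abstractly as the composite of Theorem~\ref{thm:main}(3) with the comparison equivalence for $F_0$. You justify braidedness of the latter by the surjective free-module argument from Part~(1).

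\textbf{Comparison.} The paper's route is leaner and more explicit. It needs only Part~(2), and it shows that $(\B^{\mathrm{env}})^{\loc}_\bF$ sits inside $(\B^{\mathrm{env}})_\bF\simeq\B$ exactly as $\Z_2(\B)$ does. Your route relies on the heavier double-centralizer input behind Part~(3). In exchange, it isolates the symmetry of the local-module category as a separate fact. It also avoids checking by hand that the restricted comparison functor lands in local modules and is braided.

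Your appeal to the remark after Part~(3) is redundant. Once you have Part~(3) and Step~1, you already know $\Z_2\bigl((\B^{\mathrm{env}})^{\loc}_\bF\bigr)\simeq\Z_2(\B)$, so its dimension is $\FP(\Z_2(\B))$ directly.
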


\begin{proof}
Set $\bF':=\bF\cap\Z_2(\B^{\mathrm{env}})$. By
Theorem~\ref{thm:muger-center-deligne-product},
\[
  \Z_2(\B^{\mathrm{env}})
  \simeq
  \overline{\Z_2(\B)}\boxtimes\Z_2(\B).
\]
Since $\Z_2(\B)$ is symmetric, the restriction of $F$ corestricts to a
surjective symmetric tensor functor
\[
  F_0:\overline{\Z_2(\B)}\boxtimes\Z_2(\B)
  \longrightarrow\Z_2(\B),
  \qquad
  F_0(X\boxtimes Y)=X\otimes Y.
\]
The functor $F_0$ is surjective because
$F_0(\unit\boxtimes X)\cong X$ for every $X\in\Z_2(\B)$. Define
\[
  \mathbf{B}:=F_0^\ra(\unit).
\]
By \cite[Proposition~6.6]{shimizu2026commutative}, the algebra $\mathbf B$
is indecomposable, exact, and commutative in
$\bigl(\Z_2(\B)\bigr)^{\mathrm{env}}$. It is therefore simple by
\cite[Theorem~7.1]{coulembier2025simple}.

Let $i:\Z_2(\B^{\mathrm{env}})\hookrightarrow\B^{\mathrm{env}}$ and
$j:\Z_2(\B)\hookrightarrow\B$ be the inclusions. The equality
$j\circ F_0=F\circ i$ gives an isomorphism of right adjoints
\[
  F_0^{\ra}\circ j^{\ra}\cong i^{\ra}\circ F^\ra.
\]
The tensor unit of $\B$ belongs to $\Z_2(\B)$, so
$j^{\ra}(\unit_{\B})\cong\unit_{\Z_2(\B)}$. Evaluating the preceding
isomorphism at $\unit_{\B}$ gives
\[
  \mathbf{B} = F_0^{\ra}(\unit) \cong i^{\ra} F^\ra(\unit) = \bF'.
\]

Under the comparison equivalence, an object $X\in\B$ corresponds to
$(\unit\boxtimes X)\otimes\bF$. If $X\in\Z_2(\B)$, then
$\unit\boxtimes X\in\Z_2(\B^{\mathrm{env}})$, so the corresponding free
module is local. Hence the comparison equivalence restricts to a fully
faithful braided tensor functor
\[
  T:\Z_2(\B)\longrightarrow(\B^{\mathrm{env}})_\bF^\loc.
\]

It remains to compare Frobenius--Perron dimensions. We have
\[
\begin{aligned}
\FP\bigl((\B^{\mathrm{env}})_\bF^\loc\bigr)
&=
\frac{
  \FP(\B^{\mathrm{env}})\FP_{\B^{\mathrm{env}}}(\bF')
}{
  \FP_{\B^{\mathrm{env}}}(\bF)^2
}
\\
&=
\FP(\B^{\mathrm{env}})
\frac{\FP(\B)^2}{\FP(\B^{\mathrm{env}})^2}
\frac{
  \FP\bigl((\Z_2(\B))^{\mathrm{env}}\bigr)
}{
  \FP(\Z_2(\B))
}
\\
&=
\FP(\Z_2(\B)).
\end{aligned}
\]
The first equality uses Theorem~\ref{thm:main}(2). The second uses
\cite[Proposition~6.6(d)]{shimizu2026commutative} for $F$ and $F_0$.
Thus the source and target of $T$ have the same Frobenius--Perron dimension.
Since $T$ is fully faithful, it is a braided tensor equivalence.
\end{proof}


\subsection{Relative centers and Witt equivalence}
\label{subsec:relative-centers-witt}

Let $\E$ be a finite symmetric tensor category. Following
\cite[Definition~4.1]{davydov2013structure} and
\cite[\S5.3]{decoppetstroinski2026pre}, an
\emph{$\E$-nondegenerate} braided finite tensor category is a braided finite
tensor category $\B$ equipped with a symmetric tensor equivalence
\[
  \E\xrightarrow{\sim}\Z_2(\B).
\]
Via the specified equivalence, we regard $\E$ as the full tensor subcategory
$\Z_2(\B)\subseteq\B$. Since $\E$ is symmetric, the reverse of an
$\E$-nondegenerate braided finite tensor category is again
$\E$-nondegenerate. For braided finite tensor categories equipped with braided
tensor embeddings of $\E$ into their M\"uger centers, we write
$\boxtimes_{\E}$ for their relative Deligne product.

Let $\B$ be a braided finite tensor category. 
For $X\in\B$ and $M\in\B_A$, define
\[
\begin{aligned}
  \sigma^+_{X,M}:(X\otimes A)\otimes_A M
  &\cong X\otimes M
  \xrightarrow{c_{X,M}} M\otimes X
  \cong M\otimes_A(X\otimes A).
\end{aligned}
\]
This gives the positive central lift $P:=F_A^+:\B\longrightarrow\Z(\B_A)$. 
The following theorem is a nonsemisimple analogue of
\cite[Corollary~4.6]{davydov2013structure}.

\begin{theorem}\label{thm:relative-center-local-modules}
Suppose that $\B$ is $\E$-nondegenerate and $A$ is
a commutative simple algebra in $\B$ such that $A\cap\E\cong\unit$. 
Then, the free-module functor restricts to a symmetric tensor equivalence
$\E\xrightarrow{\sim}\Z_2(\B_A^{\loc})$. In particular,
$\B_A^{\loc}$ is $\E$-nondegenerate. Moreover, the positive
central lift $P$ is fully faithful, and there is a braided equivalence over
$\E$
\[
  \Z_2(P(\E)\subset \Z(\B_A))
  \simeq
  \B\boxtimes_{\E}\overline{\B_A^{\loc}}.
\]
\end{theorem}

We prove the theorem in three steps. 

\begin{lemma}\label{lem:relative-positive-lift}
The functor $P$ is fully faithful. 
\end{lemma}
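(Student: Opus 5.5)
To show that $P=F_A^+:\B\to\Z(\B_A)$ is fully faithful, I will mirror the proof of Theorem~\ref{thm:main}(1), applied to the positive lift. The criterion I use is that a surjective-onto-image tensor functor $f:\B\to\im(P)$ is an equivalence exactly when its induced algebra $f^{\ra}(\unit)$ is trivial. This follows from the comparison theorem \cite[Proposition~6.6]{shimizu2026commutative}: there is a tensor equivalence $\im(P)\simeq\B_K$ under which $f$ becomes the free-module functor, where $K:=P^{\ra}(\unit_{\Z(\B_A)})$. If $K\cong\unit$, then $f$ is an equivalence onto $\im(P)$. Since $\im(P)$ is a full tensor subcategory of $\Z(\B_A)$, this makes $P$ fully faithful. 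So the task reduces to showing $K\cong\unit$.

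First I identify $K$ as a subalgebra of $A$, exactly as in the proof of Theorem~\ref{thm:main}(1). The relation $\U\circ P=F_A$ and the monic unit map $\unit\to\U^{\ra}(A)$ give a monic algebra map $k:K\to A$. Next, $P$ is a braided functor from $\B$ itself, not from $\oB$. Lemma~5.10 of \cite{shimizu2026commutative}, applied to the braided free-module functor $\B\to\B_K\simeq\im(P)$, then gives $K\in\Z_2(\B)=\E$.

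Hence $K$ is a subobject of $A$ lying in $\E$, so $K\subseteq A\cap\E\cong\unit$. Since $K$ contains the unit (the map $k$ preserves units and is monic), we get $K\cong\unit$. This proves the lemma.

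The main obstacle is the step $K\in\Z_2(\B)$. Two points need care. First, the footnoted correction to \cite[Theorem~5.13]{shimizu2026commutative} concerns the sign of the lift, so I must check that Lemma~5.10 there applies verbatim to $F_A^+$. That lemma should be insensitive to the sign: a braided free-module functor forces transparency of $K$ in whichever braiding the source carries, and $\Z_2(\B)=\Z_2(\oB)$. Second, the comparison theorem requires the free-module functor $\B\to\B_K$ to be braided in the sense that its transported braiding matches. As in the proof of Theorem~\ref{thm:main}(1), this is handled by transporting the braiding of $\im(P)$ along the comparison equivalence.
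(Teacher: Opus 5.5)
Your proposal is correct and follows the paper's proof essentially step for step. You realize $K=P^{\ra}(\unit_{\Z(\B_A)})$ as a subalgebra of $A$ via $\U\circ P=F_A$. You then use the comparison theorem and \cite[Lemma~5.10]{shimizu2026commutative} to get $K\in\Z_2(\B)$ (valid since $P$ is braided and $\Z_2(\B)=\Z_2(\oB)$), hence $K\subseteq A\cap\E\cong\unit$. Finally, the comparison theorem shows the corestriction of $P$ is the free-module functor $\B\to\B_{\unit}\simeq\B$, an equivalence onto the full subcategory $\im(P)$; the only difference from the paper is that you state this criterion before verifying $K\cong\unit$ rather than after.
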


\begin{proof}
Set $K:=P^{\ra}(\unit_{\Z(\B_A)})$. Since the composite of $P$ with the
forgetful functor is $F_A$, the construction in the proof of
Theorem~\ref{thm:main}(1) gives a monic algebra map $K\hookrightarrow A$.
The comparison argument in the same proof shows that $K\in\Z_2(\B)$,
because $P$ is braided. Hence $K\subseteq A\cap\E\cong\unit$, and therefore
$K\cong\unit$. The comparison theorem
\cite[Proposition~6.6]{shimizu2026commutative} identifies the corestriction
of $P$ with the free-module functor $\B\to\B_K$. Thus $P$ is fully faithful,
and so is its restriction to $\E$.
\end{proof}

Locality gives a second central lift. For $M\in\B_A^\loc$ and $N\in\B_A$, the map
$c_{N,M}^{-1}:M\otimes N\to N\otimes M$ descends to the relative tensor
products over $A$. These maps define a braided tensor functor $Q:\overline{\B_A^\loc}\longrightarrow\Z(\B_A)$. 
For $E\in\E$, the free module $F_A(E)$ is local. Hence the free-module
functor restricts to a braided tensor functor
\[
  G:\E\longrightarrow\B_A^\loc.
\]

\begin{lemma}\label{lem:relative-mutual-centralizers}
Set $\cS:=\im(P)$ and $\T:=\im(Q)$. Then $\cS$ and $\T$ are mutual
M\"uger centralizers in $\Z(\B_A)$.
Moreover, there is a monoidal isomorphism $P|_{\E}\cong Q\circ G$, 
and $G$ induces a symmetric tensor equivalence
$\E\simeq\Z_2(\B_A^\loc)$. 
In particular, $\T\cap\cS=\im(P |_\E)$.
\end{lemma}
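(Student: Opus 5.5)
We will prove the three assertions of Lemma~\ref{lem:relative-mutual-centralizers} in the order: the isomorphism $P|_{\E}\cong Q\circ G$, the mutual centralizer statement, and finally the identification of $G$ with a symmetric equivalence onto $\Z_2(\B_A^\loc)$, from which $\T\cap\cS=\im(P|_\E)$ will follow.

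\emph{Step 1: $P|_\E\cong Q\circ G$.} For $E\in\E$ both functors have underlying object $E\otimes A$ in $\B_A$, so the only thing to check is that the half-braidings agree. The half-braiding of $P(E)$ at $M$ is induced by $c_{E,M}$. The half-braiding of $QG(E)$ at $M$ is induced by $c_{M,E}^{-1}$. Since $E\in\Z_2(\B)$, we have $c_{M,E}\circ c_{E,M}=\id$, so these two maps coincide. The identity maps, together with the monoidal structure of the free-module functor, therefore give a monoidal isomorphism.

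\emph{Step 2: mutual centralizers.} We first show that $\T\subseteq\Z_2(\cS\subseteq\Z(\B_A))$. The double braiding in $\Z(\B_A)$ between $P(X)$ and $Q(M)$ is induced by $c_{M,X}\circ c_{X,M}$ composed with the inverse double braiding $c_{M,X}^{-1}c_{X,M}^{-1}$. More concretely, the braiding $P(X)\otimes Q(M)\to Q(M)\otimes P(X)$ uses $c_{X,M}$, and the reverse braiding uses $c^{-1}_{X,M}$. Their composite is therefore the identity, and this relation passes to subquotients. By \eqref{eq:centralizer-of-lift}, applied to the positive lift $\B\to\Z(\B_A)$ (the same theorem with the braiding reversed), we have $\Z_2(\cS\subseteq\Z(\B_A))\simeq\overline{\B_A^\loc}$, and this equivalence is realized by $Q$. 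Hence $\T$ equals the centralizer of $\cS$. Since $\Z(\B_A)$ is nondegenerate, the double-centralizer theorem \cite[Theorem~4.9]{shimizu2019non} gives $\cS=\Z_2(\T\subseteq\Z(\B_A))$. We expect this step to be the main obstacle: the footnoted version of \cite[Theorem~5.13]{shimizu2026commutative} must be checked in its positive-braiding form, and we must verify that the equivalence it provides is $Q$ itself, and not merely some abstract equivalence.

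\emph{Step 3: $G$ and the intersection.} Because $\cS$ and $\T$ are mutual centralizers, we get
\[
\T\cap\cS=\Z_2(\T)\simeq\overline{\Z_2(\B_A^\loc)}.
\]
By Lemma~\ref{lem:relative-positive-lift}, $P$ is fully faithful and $\cS\simeq\B$. Hence $\cS\cap\T=\Z_2(\cS)\simeq\Z_2(\B)=\E$ as well, and under this identification it equals $P(\E)=\im(P|_\E)$. By Step~1, $P(\E)=Q(G(\E))$. Since $Q$ is fully faithful (it is an equivalence onto $\T$), $G$ is fully faithful, and it lands in $\Z_2(\B_A^\loc)$ because $Q G(\E)\subseteq\cS$ centralizes $\T$. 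Comparing dimensions, Theorem~\ref{thm:main}(3) together with $A'=A\cap\E\cong\unit$ gives $\FP(\Z_2(\B_A^\loc))=\FP(\E)$. Therefore $G$ is an equivalence onto $\Z_2(\B_A^\loc)$. It is symmetric because it is braided and its source is symmetric.
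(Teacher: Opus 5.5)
Your proposal is correct and follows essentially the paper's route: \cite[Theorem~5.13]{shimizu2026commutative} applied to the positive lift with $Q$ as the local-module functor, the double-centralizer theorem, the agreement of the two half-braidings on transparent objects to get $P|_{\E}\cong Q\circ G$, and the dimension count from Theorem~\ref{thm:main}(3) to show that $G$ is an equivalence. The differences are minor. First, you obtain $\T\cap\cS=P(\E)$ directly as $\Z_2(\cS)$, using the braided equivalence $\B\simeq\cS$ from the proof of Lemma~\ref{lem:relative-positive-lift}, whereas the paper reads it off as $\Z_2(\T)=Q(\Z_2(\B_A^\loc))$ once $G$ is known to be an equivalence. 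Second, the point you flag, that the equivalence is realized by $Q$ itself, is asserted in the paper exactly as you use it, and it can be bypassed anyway: $Q$ is visibly fully faithful, and comparing Frobenius--Perron dimensions turns your inclusion $\T\subseteq\Z_2(\cS\subseteq\Z(\B_A))$ into an equality.
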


\begin{proof}
We apply the centralizer theorem used in \eqref{eq:centralizer-of-lift}, namely
\cite[Theorem~5.13]{shimizu2026commutative}, to the positive central lift
$P$. With the above convention for half-braidings, the central functor from
the local-module category in that theorem is
$Q:\overline{\B_A^\loc}\to\Z(\B_A)$. Thus $Q$ is fully faithful, and
\begin{equation}\label{eq:relative-positive-centralizer}
  \T = \Z_2\bigl(\cS\subseteq\Z(\B_A)\bigr).
\end{equation}
The double-centralizer theorem \cite[Theorem~4.9]{shimizu2019non} then gives
\begin{equation}\label{eq:relative-negative-centralizer}
  \cS
  =
  \Z_2\bigl(\T\subseteq\Z(\B_A)\bigr).
\end{equation}

For $E\in\E$, the positive half-braiding on $P(E)$ agrees with the inverse
half-braiding on $Q(G(E))$, since $E$ is transparent. These identifications
give a monoidal isomorphism
\begin{equation}\label{eq:relative-central-lifts-agree}
  P|_{\E}\cong Q\circ G.
\end{equation}
Since $P$ and $Q$ are fully faithful, so is $G$. Moreover,
$Q(G(E))\cong P(E)$ belongs to $\T\cap\cS$, and $\cS$ centralizes $\T$.
Thus $G$ takes values in $\Z_2(\B_A^\loc)$. Finally,
Theorem~\ref{thm:main}(3), with $A'=A\cap\E\cong\unit$, gives
$\FP(\Z_2(\B_A^\loc))=\FP(\E)$. It follows from
\cite[Proposition~6.3.3]{etingof2016tensor} that
$G:\E\to\Z_2(\B_A^\loc)$ is a symmetric tensor equivalence.
Together with \eqref{eq:relative-negative-centralizer} and
\eqref{eq:relative-central-lifts-agree}, this gives
\begin{equation}\label{eq:relative-centralizer-intersection}
  \T\cap\cS = \Z_2(\T) =  Q\bigl(\Z_2(\B_A^\loc)\bigr) = P(\E).
\end{equation}
\end{proof}

\begin{lemma}\label{lem:relative-center-factorization}
The functor $\B\boxtimes\overline{\B_A^\loc}\to\Z(\B_A)$ defined by
$X\boxtimes M\mapsto P(X)\otimes Q(M)$ descends to a braided equivalence
$\B\boxtimes_{\E}\overline{\B_A^\loc}\simeq\Z_2(P(\E)\subset \Z(\B_A))$ over $\E$.
\end{lemma}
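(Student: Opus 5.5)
We will build the functor, show it lands in the centralizer of $P(\E)$ and is fully faithful on the relative product, and then conclude by comparing Frobenius--Perron dimensions.

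\textbf{Step 1: the functor.} Consider
\[
  \Phi:\B\boxtimes\overline{\B_A^\loc}\to\Z(\B_A),\qquad X\boxtimes M\mapsto P(X)\otimes Q(M).
\]
By Lemma~\ref{lem:relative-mutual-centralizers}, $\cS=\im(P)$ and $\T=\im(Q)$ centralize each other in $\Z(\B_A)$. Hence the half-braidings supply the tensor structure on $\Phi$: one uses $c_{Q(M),P(X')}$ to move $Q(M)$ past $P(X')$, and the mutual centralizing makes the choice between braiding and inverse braiding irrelevant. With the product braiding on the source, $\Phi$ is therefore a braided tensor functor.

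\textbf{Step 2: descent and target.} The monoidal isomorphism $P|_\E\cong Q\circ G$ of Lemma~\ref{lem:relative-mutual-centralizers} gives natural isomorphisms
\[
  \Phi(E\otimes X\boxtimes M)\cong\Phi(X\boxtimes G(E)\otimes M).
\]
These are compatible with the tensor structures, so $\Phi$ is $\E$-balanced and descends to
\[
  \bar\Phi:\B\boxtimes_\E\overline{\B_A^\loc}\to\Z(\B_A).
\]
Its image lies in $\Z_2(P(\E)\subset\Z(\B_A))$ for two reasons. First, $P(X)$ centralizes $P(E)$, because $P$ is braided and $E$ is transparent in $\B$. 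Second, $Q(M)$ centralizes $P(E)\cong Q(G(E))$, because $G(E)\in\Z_2(\B_A^\loc)$.

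\textbf{Step 3: dimension count.} The centralizer dimension formula \cite[Theorem~4.9]{shimizu2019non} in the nondegenerate category $\Z(\B_A)$, combined with full faithfulness of $P$ (Lemma~\ref{lem:relative-positive-lift}), gives
\[
  \FP\bigl(\Z_2(P(\E)\subset\Z(\B_A))\bigr)=\FP(\B_A)^2/\FP(\E)=\FP(\B)^2/\bigl(\FP_\B(A)^2\FP(\E)\bigr).
\]
On the other side, Theorem~\ref{thm:main}(2) with $A'\cong\unit$ gives $\FP(\B_A^\loc)=\FP(\B)/\FP_\B(A)^2$. Since $\FP(\B\boxtimes_\E\C)=\FP(\B)\FP(\C)/\FP(\E)$, the two dimensions agree.

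\textbf{Step 4: equivalence.} It now suffices to show that $\bar\Phi$ is fully faithful, or at least injective in the sense of \cite[Proposition~6.3.3]{etingof2016tensor}. This is the step I expect to be the main obstacle. I would first try to show that the right adjoint of $\Phi$ sends $\unit$ to the algebra $\bigoplus$-free object $\bigl(P\boxtimes Q\bigr)^\ra(\unit)$, and identify it with the canonical algebra defining $\boxtimes_\E$, namely the image of the adjoint algebra of $\otimes:\E\boxtimes\E\to\E$. The identification would run as follows. We have
\[
  \Hom(P(X)\otimes Q(M),\unit)\cong\Hom(P(X),Q(M)^*).
\]
This is nonzero only through $\cS\cap\T=P(\E)$, which is the final assertion of Lemma~\ref{lem:relative-mutual-centralizers}. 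Full faithfulness of $P$ and $Q$ then lets us compute this Hom via $\E$. This shows that the induced algebra equals the algebra whose module category is $\B\boxtimes_\E\overline{\B_A^\loc}$. The comparison theorem \cite[Proposition~6.6]{shimizu2026commutative} then identifies the corestriction of $\Phi$ with $\bar\Phi$, which makes $\bar\Phi$ fully faithful onto its image. Equal dimensions then force $\bar\Phi$ to be an equivalence, and it is braided and compatible with $\E$ by construction.
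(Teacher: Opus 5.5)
Your Steps~1--3 agree with the paper: the $\E$-balancing from $P|_{\E}\cong Q\circ G$, the containment of the image in $\Z_2(P(\E)\subset\Z(\B_A))$, and the count $\FP(\B)\FP(\B_A^\loc)/\FP(\E)=\FP(\B_A)^2/\FP(\E)$ are exactly what the paper uses.

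\textbf{The gap is Step~4.} It is where the equivalence is actually decided, and you do not prove it. Your sketch only observes that an individual morphism $P(X)\to Q(M)^*$ factors through an object of $\cS\cap\T=P(\E)$. That does not yet give an isomorphism of algebras $\Phi^{\ra}(\unit)\cong R$, where $R$ is the algebra presenting $\B\boxtimes_{\E}\overline{\B_A^\loc}$. The step ``full faithfulness of $P$ and $Q$ lets us compute this Hom via $\E$'' is asserted, not shown. The natural shortcut would be to compare $R$ with $\Phi^{\ra}(\unit)$ through the canonical algebra map between them (Lemma~\ref{lem:alg-map-factorization}) and count dimensions. But that needs $\FP(\im\bar\Phi)$, which is precisely the statement that $\bar\Phi$ is surjective onto the centralizer. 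So injectivity is not the easy direction here.

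\textbf{The missing idea} is that surjectivity follows directly from what you already have, and surjectivity together with equal dimensions suffices. The image of $\bar\Phi$ is $\mathcal K=\cS\vee\T$. By Lemma~\ref{lem:relative-mutual-centralizers} ($\cS$ and $\T$ are mutual centralizers with $\T\cap\cS=P(\E)$),
\[
  \Z_2\bigl(\mathcal K\subseteq\Z(\B_A)\bigr)
  =\Z_2\bigl(\cS\subseteq\Z(\B_A)\bigr)\cap\Z_2\bigl(\T\subseteq\Z(\B_A)\bigr)
  =\T\cap\cS=P(\E).
\]
Since $\Z(\B_A)$ is nondegenerate, the double-centralizer theorem \cite[Theorem~4.9]{shimizu2019non} gives $\mathcal K=\Z_2(P(\E)\subseteq\Z(\B_A))$. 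Hence $\bar\Phi$ corestricts to a \emph{surjective} braided tensor functor onto the target. By \cite[Proposition~6.3.4]{etingof2016tensor}, a surjective tensor functor between finite tensor categories of equal Frobenius--Perron dimension is an equivalence, so your Step~3 then finishes the proof. Compatibility with $\E$ holds because $E\boxtimes_{\E}\unit\mapsto P(E)$.

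This is the paper's argument. It makes your Step~4, and the separate containment check in Step~2, unnecessary.
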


\begin{proof}
The isomorphism \eqref{eq:relative-central-lifts-agree} supplies an
$\E$-balancing. Explicitly, for $E\in\E$, it gives coherent isomorphisms
\[
  P(X\otimes E)\otimes Q(M)
  \cong
  P(X)\otimes Q\bigl(G(E)\otimes_A M\bigr).
\]
Let $\Phi:\B\boxtimes_{\E}\overline{\B_A^\loc}\to\Z(\B_A)$ be the resulting braided
tensor functor, and let $\mathcal K:=\cS\vee\T$ be its image. By
\eqref{eq:relative-positive-centralizer} and
\eqref{eq:relative-negative-centralizer}, together with
\eqref{eq:relative-centralizer-intersection},
\[
\begin{aligned}
  \Z_2\bigl(\mathcal K\subseteq\Z(\B_A)\bigr)
  &=
  \Z_2\bigl(\cS\subseteq\Z(\B_A)\bigr)
  \cap
  \Z_2\bigl(\T\subseteq\Z(\B_A)\bigr)
  \\
  &=
  \T\cap\cS
  =
  P(\E).
\end{aligned}
\]
Applying the double-centralizer theorem once more yields
\[
  \mathcal K
  =
  \Z_2\bigl(P(\E)\subseteq\Z(\B_A)\bigr).
\]
Thus $\Phi$ corestricts to a surjective braided tensor functor $\Phi: \B\boxtimes_{\E}\overline{\B_A^\loc} \longrightarrow \Z_2(P(\E) \subset \Z(\B_A))$.

It remains to compare dimensions. A similar argument as in the proof of 
\cite[Theorem~3.2.1]{decoppet2025invertibility}, together with the
standard dimension formula, gives the first equality below.
Theorem~\ref{thm:main}(2) and \eqref{eq:module-category-dimension} give the
second, while the center and centralizer dimension formulas used in the proof
of Theorem~\ref{thm:main}(2) give the third:
\[
  \FP\bigl(\B\boxtimes_{\E}\overline{\B_A^\loc}\bigr)
  =\frac{\FP(\B)\FP(\B_A^\loc)}{\FP(\E)}
  =\frac{\FP(\B_A)^2}{\FP(\E)}
  =\FP\bigl(\Z_2(P(\E)\subset \Z(\B_A))\bigr).
\]
The surjective tensor functor $\Phi$ is therefore an equivalence by
\cite[Proposition~6.3.4]{etingof2016tensor}. For every $E\in\E$, the image of $E$ in the source is represented by $E\boxtimes_{\E}\unit_{\B_A^\loc}$, and $\Phi$ sends this object to
$P(E)$. Hence $\Phi$ is an equivalence over $\E$.
\end{proof}

\begin{proof}[Proof of Theorem~\ref{thm:relative-center-local-modules}]
Lemma~\ref{lem:relative-positive-lift} proves that $P$, and hence its
restriction to $\E$, is fully faithful. The relative-center equivalence is
Lemma~\ref{lem:relative-center-factorization}. Finally,
Lemma~\ref{lem:relative-mutual-centralizers} identifies the M\"uger center of
$\B_A^{\loc}$ with $\E$, and hence proves the last assertion.
\end{proof}

We now record the consequence for relative Witt equivalence. 
An $\E$-enriched finite tensor category is a finite tensor category
$\C$ equipped with a braided tensor functor
$\iota_{\C}:\E\to\Z(\C)$. Following
\cite[\S5.3]{decoppetstroinski2026pre}, we call the enrichment
\emph{faithfully flat} if $\iota_{\C}$ is fully faithful. In this case, set
\[
  \Z(\C,\E)
  :=
  \Z_2\bigl(\iota_{\C}(\E)\subseteq\Z(\C)\bigr).
\] 

Via the canonical braided embedding
$\Z_2(\mathcal X)\hookrightarrow\Z(\mathcal X)$, every
$\E$-nondegenerate braided finite tensor category $\mathcal X$ is a
faithfully flat $\E$-enriched finite tensor category. Moreover,
Lemma~\ref{lem:relative-positive-lift} shows that
$P|_{\E}:\E\to\Z(\B_A)$ makes $\B_A$ a faithfully flat
$\E$-enriched finite tensor category. Its relative Drinfeld center is
\[
  \Z(\B_A,\E)
  =
  \Z_2\bigl(P(\E)\subseteq\Z(\B_A)\bigr).
\]

Following
\cite[Definition~5.9]{decoppetstroinski2026pre}, which generalizes
\cite[Definition~5.1]{davydov2013structure}, let $\B_1$ and $\B_2$ be
$\E$-nondegenerate braided finite tensor categories. We say
that they are \emph{Witt equivalent over $\E$} if there exist faithfully flat
$\E$-enriched finite tensor categories $\mathcal A_1$ and $\mathcal A_2$ and
a braided equivalence over $\E$
\[
  \B_1\boxtimes_{\E}\Z(\mathcal A_1,\E)
  \simeq
  \B_2\boxtimes_{\E}\Z(\mathcal A_2,\E).
\]
In this case, we write $[\B_1]_{\E}=[\B_2]_{\E}$.

\begin{corollary}\label{cor:relative-witt-local-modules}
Let $\B$ be an $\E$-nondegenerate braided finite tensor category and $A$ a commutative simple algebra in $\B$. If $A\cap\E\cong\unit$, then
$[\B]_{\E}=[\B_A^{\loc}]_{\E}$.
\end{corollary}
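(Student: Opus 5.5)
We must show $[\B]_{\E}=[\B_A^{\loc}]_{\E}$. By the definition of Witt equivalence over $\E$ (following \cite[Definition~5.9]{decoppetstroinski2026pre}), it is enough to exhibit faithfully flat $\E$-enriched finite tensor categories $\mathcal A_1,\mathcal A_2$ and a braided equivalence over $\E$
\[
  \B\boxtimes_{\E}\Z(\mathcal A_1,\E)\simeq\B_A^{\loc}\boxtimes_{\E}\Z(\mathcal A_2,\E).
\]
Theorem~\ref{thm:relative-center-local-modules} already provides such an equivalence in a slightly different form, so the plan is to rearrange it.

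First we record the ingredients. Theorem~\ref{thm:relative-center-local-modules} shows that $\B_A^{\loc}$ is $\E$-nondegenerate, so the class $[\B_A^{\loc}]_{\E}$ is defined. Lemma~\ref{lem:relative-positive-lift} shows that $P|_{\E}$ makes $\B_A$ a faithfully flat $\E$-enriched category with relative center $\Z(\B_A,\E)=\Z_2(P(\E)\subseteq\Z(\B_A))$. Combining this with Lemma~\ref{lem:relative-center-factorization}, we obtain a braided equivalence over $\E$
\[
  \Z(\B_A,\E)\simeq\B\boxtimes_{\E}\overline{\B_A^{\loc}}.
\]

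Next we take the relative Deligne product of both sides with $\B_A^{\loc}$ over $\E$. Using associativity and commutativity of $\boxtimes_{\E}$ up to braided equivalence over $\E$, this gives
\[
  \B_A^{\loc}\boxtimes_{\E}\Z(\B_A,\E)
  \simeq
  \B\boxtimes_{\E}\bigl(\B_A^{\loc}\boxtimes_{\E}\overline{\B_A^{\loc}}\bigr).
\]
The key input here is the standard relative identity: for an $\E$-nondegenerate $\mathcal X$,
\[
  \mathcal X\boxtimes_{\E}\overline{\mathcal X}\simeq\Z(\mathcal X,\E),
\]
where $\mathcal X$ is regarded as $\E$-enriched via $\Z_2(\mathcal X)\hookrightarrow\Z(\mathcal X)$. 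This is the relative analogue of $\Z(\mathcal X)\simeq\mathcal X\boxtimes\overline{\mathcal X}$. Setting $\mathcal A_1:=\B_A^{\loc}$ and $\mathcal A_2:=\B_A$ then yields
\[
  \B\boxtimes_{\E}\Z(\mathcal A_1,\E)\simeq\B_A^{\loc}\boxtimes_{\E}\Z(\mathcal A_2,\E),
\]
which is exactly $[\B]_{\E}=[\B_A^{\loc}]_{\E}$.

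The main obstacle is justifying the identity $\mathcal X\boxtimes_{\E}\overline{\mathcal X}\simeq\Z(\mathcal X,\E)$ in the nonsemisimple setting. My first attempt will be to derive it from Theorem~\ref{thm:relative-center-local-modules} itself. Take $A=\unit$; then $A\cap\E\cong\unit$, $\B_A=\B$, $\B_A^{\loc}=\B$, and $P:\B\to\Z(\B)$ is the canonical lift. The theorem then gives $\Z(\B,\E)\simeq\B\boxtimes_{\E}\overline{\B}$ over $\E$, which is precisely the identity needed, applied to $\mathcal X=\B_A^{\loc}$. What remains is routine: checking that the $\E$-enrichment of $\mathcal X$ via its M\"uger center matches the one through $P|_{\E}$, which holds because $P$ restricted to transparent objects is the canonical embedding $\Z_2\hookrightarrow\Z$. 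The coherence of associativity and commutativity of $\boxtimes_{\E}$ over $\E$ can be taken from \cite{decoppetstroinski2026pre}.
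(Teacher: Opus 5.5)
Your proposal is correct and is essentially the paper's proof. You use the same witnesses $\mathcal A_1=\B_A^{\loc}$ and $\mathcal A_2=\B_A$, and you apply Theorem~\ref{thm:relative-center-local-modules} twice: once to $(\B,A)$ to get $\Z(\B_A,\E)\simeq\B\boxtimes_{\E}\overline{\B_A^{\loc}}$, and once to $(\B_A^{\loc},\unit)$, which is legitimate because $\B_A^{\loc}$ is $\E$-nondegenerate, to get $\Z(\B_A^{\loc},\E)\simeq\B_A^{\loc}\boxtimes_{\E}\overline{\B_A^{\loc}}$; you then rearrange using associativity and commutativity of $\boxtimes_{\E}$, exactly as the paper does, and your check that the two $\E$-enrichments of $\B_A^{\loc}$ agree on transparent objects is a detail the paper leaves implicit.
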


\begin{proof}
Applying Theorem~\ref{thm:relative-center-local-modules} to $(\B,A)$ shows
that $\B_A^{\loc}$ is $\E$-nondegenerate. We may therefore apply the theorem
again to $(\B_A^{\loc},\unit)$. The two resulting equivalences give
\[
  \B\boxtimes_{\E}\Z(\B_A^\loc,\E) \simeq
  \B\boxtimes_{\E}\B_A^\loc\boxtimes_{\E}\overline{\B_A^\loc}
  \simeq
  \B_A^\loc \boxtimes_{\E}\B\boxtimes_{\E}\overline{\B_A^\loc}
  \simeq
  \B_A^\loc \boxtimes_{\E}\Z(\B_A,\E).
\]
Thus, the assertion follows from the definition, with witnesses
$\mathcal A_1=\B_A^\loc$ and $\mathcal A_2=\B_A$. 
\end{proof}
This result is the finite-tensor category analogue of
\cite[Proposition~5.3]{davydov2013structure}.

\bibliographystyle{alpha}
\bibliography{references}

\end{document}